\numberwithin{equation}{section}
\title{Optimal Reinsurance and Investment Strategies under Mean-Variance Criteria: Partial and Full Information\thanks{This work is financially supported by the National Key R\&D Program of China (2018YFB1305400), and the National Natural Science Foundations of China  (11971266, 11571205, 11831010).}}
\author{\normalsize Shihao Zhu,\thanks{\it School of Mathematics, Shandong University, Jinan 250100, P.R. China, E-mail: zhush10@hotmail.com}\quad
Jingtao Shi\thanks{\it Corresponding author, School of Mathematics, Shandong University, Jinan 250100, P.R. China, E-mail: shijingtao@sdu.edu.cn}}
\begin{document}

\maketitle

\begin{abstract}
This paper is concerned with an optimal reinsurance and investment problem for an insurance firm under the criterion of mean-variance. The driving Brownian motion and the rate in return of the risky asset price dynamic equation cannot be directly observed. And the short-selling of stocks is prohibited. The problem is formulated as a stochastic linear-quadratic control problem where the control variables are constrained. Based on the separation principle and stochastic filtering theory, the partial information problem is solved. Efficient strategies and efficient frontier are presented in closed forms via solutions to two extended stochastic Riccati equations. As a comparison, the efficient strategies and efficient frontier are given by the viscosity solution for the HJB equation in the full information case. Some numerical illustrations are also provided.
\end{abstract}

\noindent{\bf Keywords:}\quad Mean-variance, optimal reinsurance and investment, partial information, stochastic filtering, viscosity solution

\vspace{2mm}

\noindent{\bf Mathematics Subject Classification:}\quad 60H10, 93E20, 93C41, 93E11, 49L25 

\section{Introduction}

In recent years, there has been an increasing research interests in applying stochastic control theory to the optimal reinsurance and optimal investment problems for various models.  As is well known,  reinsurance is an effective method to reduce insurance risk, while investment is also a very important element in the insurance business. Maximizing the utility and minimizing the probability of ruin are the two main optimization criteria in the literature. A partial list of recent work in such field includes: Browne \cite{Browne1995}, Yang and Zhang \cite{YangZhang2005}, Promislow and Young \cite{PY2005}, Bai and Guo \cite{BaiGuo2008}, Liang et al. \cite{LiangYuenGuo2011},  Xu et al. \cite{XuZhangYao2017}, etc. It is worth mentioning that Bai and Guo \cite{BaiGuo2008} explicitly derived the optimal value functions and optimal strategies by solving the corresponding Hamilton-Jacobi-Bellman (HJB, for short) equations. They also showed that in some special cases, the optimal strategies for maximizing the expected exponential utility and minimizing the probability of ruin are equivalent. Liang et al. \cite{LiangYuenGuo2011} studied the optimal investment and reinsurance strategy with the instantaneous rate of investment return follows an Ornstein-Uhlenbeck process. Xu et al. \cite{XuZhangYao2017} considered the financial market was driven by a drifted Brownian motion with coefficients modulated by an external Markov process. They derived the explicit optimal investment and reinsurance policy with the expected terminal utility.

However, all these works are predominantly done within the expected utility framework. It should be noted that mean-variance analysis and expected utility formulation are two important models in the financial market. The reader is referred to Bielecki et al. \cite{BJPZ2005}, Steinbach \cite{Steinbach2001} and MacLean et al. \cite{MZZ2011} for discussion on crucial differences between the expected utility and mean-variance models. The mean-variance criterion is firstly proposed in portfolio selection by Markowitz \cite{Markowitz1952} considering the expected return as well as the variance of the investment in single period. Li and Ng \cite{LiNg2000} extended Markowitz's mean-variance model to the multi-period setting by using an idea of embedding the problem into tractable auxiliary problem. In the paper by Zhou and Li \cite{ZhouLi2000}, the continuous-time mean-variance problem is studies by using stochastic linear-quadratic (LQ, for short) control theory. Considering the constraint that short-selling of stocks is prohibited, the corresponding HJB equation inherently has no smooth solution. To tackle this difficulty, Li et al. \cite{LiZhouLim2002} constructed a continuous function via two Riccati equations and showed that this function is a viscosity solution to the HJB equation. Hu and Zhou \cite{HuZhou2005} studied a stochastic LQ control problem where the control variable was constrained in a cone and all the coefficients of the problem were random processes.  By Tanaka's formula, they explicitly obtained optimal control and optimal cost via solutions to two extended stochastic Riccati equations.

Recently, more researching attentions are drawn to adopt the mean-variance criterion in insurance modeling. For example, Bai and Zhang \cite{BaiZhang2008} derived the optimal proportional reinsurance and investment strategy in both classical model and its diffusion approximation under the mean-variance criterion. Bi et al. \cite{BiMengZhang2014} considered the optimal investment and optimal reinsurance problems for an insurer under the criterion of mean-variance with bankruptcy prohibition. Zhang et al. \cite{ZhangChenJinLi2017} considered the mean-variance criterion to proportional reinsurance and investment problem of an insurer whose risk process is driven by the diffusion approximation of a controlled compound Poisson process.

However, in all these works it is assumed that the driving Brownian motions are completely observable by an investor, which in reality is more an exception than a rule. Practically, the investor can observe only the stock prices on which he will base his decisions. In fact, optimal portfolio problems with partial information in financial markets under various setups have been studied extensively in the financial economic literature. Di Nunno and \O ksendal \cite{DO2009} considered an optimal portfolio problem for a dealer who has access to some information that in general is smaller than the one generated by market events. Peng and Hu \cite{PengHu2013} studied the optimal proportional reinsurance and investment strategy for an insurer that only has partial information at its disposal. Malliavin calculus for L\'{e}vy processes are applied in their analysis. Wang and Wu \cite{WangWu2009} obtained some general maximum principles for the partially observed risk-sensitive stochastic control problems. Huang et al. \cite{HuangWangWu2010} studied the optimal premium policy of an insurance firm in two situations: full information and partial information. In both situations, they characterized the optimal premium policy with the associated optimal cost functionals.

Different from previous expected utility criteria with partial information, Pham \cite{Pham2001} considered a mean-variance hedging problem for a general semimartingale model and proved a separation principle for a diffusion model by the martingale method. Xiong and Zhou \cite{XiongZhou2007} proved a separation principle in a continuous-time mean-variance portfolio selection problem. Pang et al. \cite{PangNiLiYiu2014} studied a continuous-time mean-variance portfolio selection problem under a stochastic environment. A partial information stochastic control problem with random coefficients was formulated. They showed that the optimal portfolio strategy was constructed by solving a deterministic Riccati-type ordinary differential equation (ODE, for short) and two deterministic backward ODEs. Liang and Song \cite{LiangSong2015} studied an optimal investment and reinsurance problem under partial information for insurer with mean-variance utility, with unobservable Markov-modulated regime switching drift process. Time-consistent equilibrium strategy is obtained within a game theoretic framework. Cao et al. \cite{CaoPengHu2016} considered a problem of the optimal time-consistent investment and proportional
reinsurance strategy under the mean-variance criterion, in which the insurer has some inside information at her disposal concerning the future realizations of her claims process. A verification theorem on the extended HJB equations is provided, and the optimal strategy was obtained.

In the present paper, we shall consider a new partial information problem of an insurance firm towards optimal reinsurance and investment. We assume the insurance firm is allowed to take reinsurance and invest its wealth in a Black-Scholes market. However, we cannot directly observe the Brownian motion and the rate of return in the risky asset price dynamic equation. In fact, only partial information concerning the past risky asset prices and the randomness from the insurance claims are available to the policymaker. We overcome the difficulties encountered by stochastic filtering technique. Different from the criterion considered in Xu et al. \cite{XuZhangYao2017} and Liang et al. \cite{LiangYuenGuo2011}, we apply the mean-variance criteria in this paper. The stochastic LQ control approach, stochastic Riccati equations and viscosity solution theory are applied to obtain the efficient strategies and efficient frontier.

The rest of this paper is organized as follows. In Section 2, the optimal reinsurance and investment problems with partial information is formulated. Section 3 focused on the filtering problem and the mean-variance criteria. Efficient strategies and efficient frontier are presented in closed forms via solutions to two extended stochastic Riccati equations. Section 4 presents the efficient strategies and efficient frontier by the viscosity solution of the HJB equation in the full information case. Some numerical illustrations are provided here. Section 5 concludes this paper.

\section{Problem Formulation}

We fix a finite time horizon $[0,T]$ and a complete probability space $({\Omega},\mathcal{F},\mathbb{P})$, on which three 1-dimensional standard Brownian motions $W^0(\cdot)$, $W^1(\cdot)$ and $W^2(\cdot)$ are defined. We assume that they are mutually independent. For notational clarity, we denote $\{\mathcal{F}_t^{W^0}\}_{0\le t\le T}$, $\{\mathcal{F}_t^{W^1}\}_{0\le t\le T}$ and $\{\mathcal{F}_t^{W^2}\}_{0\le t\le T}$ to be the filtrations generated by $W^0(\cdot)$, $W^1(\cdot)$ and $W^2(\cdot)$, respectively, and denote $\{\mathcal{F}_t\}_{0\le t\le T}:=\{{\mathcal{F}}_t^{W^0}\otimes{\mathcal{F}}_t^{W^1}\otimes{\mathcal{F}}_t^{W^2}\}_{0\le t\le T} $.  Let $\mathcal{F}=\mathcal{F}_T$ and $\mathbb{E}[\cdot]$ be the expectation with respect to $\mathbb{P}$.

Now consider an insurance firm whose claim process is denoted by $C(\cdot)$. Following the framework of Promislow and Young \cite{PY2005}, we model the claim process $C(\cdot)$ according to a Brownian motion with drift as follows:
\begin{equation}\label{claim}
dC(t)=adt-bdW^0(t),\ t\in[0,T],
\end{equation}
where $a$ and $b$ are positive constants. $W^0(\cdot)$ represents the randomness form the insurance claims. Assume that the premium is paid continuously at the constant rate $c$, which is calculated by the expected value principle, i.e., $c=(1+\theta)a$, where $\theta>0$ is the relative safety loading of the insurer.

Suppose that the insurer is allowed to invest its surplus in a financial market consisting of a risk-free asset and a risky asset, whose price dynamics are described by the following:
\begin{equation}\label{asset}
\left\{
\begin{aligned}
dB(t)&=r(t)B(t)dt,\ t\in[0,T],\\
dS(t)&=\mu(t)S(t)dt+\sigma(t)S(t)dW^1(t),\ t\in[0,T],
\end{aligned}
\right.
\end{equation}
respectively. Here the interest rate $r(\cdot)>0$ is a deterministic, uniformly bounded, scalar-valued function, the rate of return $\mu(\cdot)$ is an $\mathcal{F}_t$-adapted process which satisfies
\begin{equation}\label{rate of return}
d\mu(t)=h(t)\mu(t)dt+l(t)dW^1(t)+z(t)dW^2(t),\ t\in[0,T],
\end{equation}	
where $h(\cdot),l(\cdot)$ and $z(\cdot)$ are deterministic functions. The volatility rate $\sigma(\cdot)$ a deterministic, uniformly bounded, scalar-valued function together with $\sigma(\cdot)^{-1}$ is also bounded.

\newtheorem*{remark1}{Remark}
\begin{remark1}
The above assumption $(2.3)$ about the rate of return $\mu(\cdot)$ can refer to Gennotte \cite{gennotte1986}. In fact, there are two different sources of uncertainty in financial market. The first one, $W^1(\cdot)$, affects stock prices and rate of return. We can think of it as economic cycle factors. The second one, $W^2(\cdot)$, affects the rate of return. We can put it as size of firms that issues shares or book-to-market values according to economical model.
\end{remark1}

In this paper, we shall assume that $\mathcal{F}_t^{W^0}$ and $\mathcal{F}_t^S:=\sigma(S_u,0\le u \le t)$ are independent, and denote $\mathcal{G}_t:=\mathcal{F}_t^{W^0}\otimes\mathcal{F}_t^S$ which is the only information available to the insurance firm at time $t$. That is to say, we only know the randomness from the insurance claims and the price process of the risky assets.

In addition to investment, we assume that the insurer can purchase proportional reinsurance to reduce the underlying insurance risk. The reinsurance level is associated with the value $1-q(t)$ at time $t$ with $q(t)\ge0$ for all $t$.

A strategy $u(t):=(\pi(t),q(t))^\prime$, where $\pi(t)$ represents the amount invested in the risky asset at time $t$. Here, $\pi(\cdot)\in[0,\infty)$ is in the case when short-selling is not allowed. On the other hand, $q(\cdot)\in[0,1]$ corresponds to a proportional reinsurance and $q(\cdot)>1$ corresponds to acquiring new reinsurance business.

Under the above assumptions, the surplus/wealth process $X(\cdot)$ of the insurance firm satisfies:
\begin{equation}\label{wealth/suplus}
\left\{
\begin{aligned}
dX(t)&=cdt-q(t)dC(t)-(1+\eta)a(1-q(t))dt\\
     &\quad+(X(t)-\pi(t))r(t)dt+\pi(t){\frac{dS(t)}{S(t)}}\\
     &=\big[a\theta-a\eta+a\eta q(t)+r(t)X(t)+(\mu(t)-r(t))\pi(t)\big]dt\\
     &\quad+bq(t)dW^0(t)+\pi(t)\sigma(t)dW^1(t),\ t\in[0,T],\\
 X(0)&=x_0,
\end{aligned}
\right.
\end{equation}
where $x_0>0$ is initial wealth and $\eta$ represents the safety loading of reinsurance.  It is generally
assumed that $\eta \ge \theta$, where $\eta = \theta$ means that the proportion of premium transferred to the reinsurer is the same as the proportion of each claim insured by the reinsurer, then the contract is called Cheap-reinsurance. In addition, if $\eta > \theta$, it is called Noncheap-reinsurance.

\newtheorem*{mydef}{Definition 2.1}
\begin{mydef}
A strategy $u(\cdot):=(\pi(\cdot),q(\cdot))^\prime$ is said to be admissible if $\pi(\cdot)\in[0,\infty)$ and $q(\cdot)\in[0,\infty)$ are $\mathcal{G}_t$-progressively measurable, satisfying $\mathbb{E}\int_0^T\pi^2(t)dt<\infty$ and $\mathbb{E}\int_0^Tq^2(t)dt<\infty$. Denote the set of all admissible strategies by $\mathcal{U}^P_{ad}[0,T]$.
\end{mydef}
The mean-variance problem refers to finding admissible strategies such that the expected terminal wealth satisfies $\mathbb{E}X(T)=d>0$, while the risk measured by the variance of the terminal wealth
\begin{equation}\label{variance}
\text{Var}[X(T)]=\mathbb{E}[X(T)-\mathbb{E}X(T)]^2=\mathbb{E}[X(T)-d]^2
\end{equation}
is minimized.

It is reasonable to impose $d\ge d_0$, where
$$d_0:=e^{\int_0^Tr(s)ds}\Big\{x_0+a(\theta-\eta)\Big({\int_0^Te^{-\int_0^tr(s)ds}dt}-1\Big)\Big\}$$
is the terminal wealth at time $T$, if the insurance firm invests all of its wealth at hand into the risk-free asset and transfers all forthcoming risks to the reinsurer.

\newtheorem*{mydef2.2}{Definition 2.2}
\begin{mydef2.2}
The mean-variance problem is formulated as the following optimization problem with partial information:
\begin{equation}\label{mean-variance}
\begin{aligned}
&\emph{minimize}\quad J_{\mathrm{MV}}\left(x_0,u(\cdot)\right):=\emph{Var}[X(T)]\equiv\mathbb{E}[X(T)-\mathbb{E}X(T)]^2,\\
&\emph{subject to}
\left\{
\begin{array}{l}
\mathbb{E}X(T)=d,\\
u(\cdot)\in\mathcal{U}^{P}_{ad}[0,T],\\
{(X(\cdot),u(\cdot)) \text { satisfy equation }(\ref{wealth/suplus})}.
\end{array}
\right.
\end{aligned}
\end{equation}
Moreover, the optimal control $u^*(\cdot)$ satisfying (\ref{mean-variance}) is called an efficient strategy, and $(\operatorname{Var}[X^*(T)], d)$ is called an efficient point. The set of all efficient points, when the parameter $d$ runs over $ [d_0,+\infty)$, is called the efficient frontier.
\end{mydef2.2}

\section{Efficient Strategies and Efficient Frontier with Partial Information}

A notorious difficulty in tackling general stochastic optimization problems with partial information is that one usually cannot separate the filtering and optimization, except for some very rare situations. However, the separation principle in Xiong and Zhou \cite{XiongZhou2007} shows that for some specific mean-variance problems, the separation principle happens to hold: one can simply replace the rate of return with its filter in the wealth equation and then solve the resulting optimization problem as in the full information case. For simplicity, we consider the Cheap-reinsurance in this partial information section, i.e., $\eta=\theta$. At this time, $d_0$ is simplified to $d_0=x_0e^{\int_0^Tr(s)ds}$.

\subsection{Separate Principle and Stochastic Filtering}

In this subsection, we first consider the filtering problem associated with our model (\ref{mean-variance}) and establish a separation principle. Specifically, we define the innovation process for the filtering problem. We are just here to draw a conclusion, details can be found in Xiong and Zhou \cite{XiongZhou2007}.

\newtheorem*{myle}{Lemma 3.1}
\begin{myle}
For any admissible control $u(\cdot)\in\mathcal{U}^{P}_{ad}[0,T]$, the corresponding wealth process $X(\cdot)$ satisfies the following SDE:
\begin{equation}\label{wealth/suplus-new}
\left\{
\begin{aligned}
dX(t)&=\big[a\eta q(t)+r(t)X(t)+(m(t)-r(t))\pi(t)\big]dt\\
     &\quad+bq(t)dW^0(t)+\pi(t)\sigma(t)d\bar{W}(t),\ t\in[0,T],\\
 X(0)&=x_0,
\end{aligned}
\right.
\end{equation}
where $m(t)\equiv\mathbb{E}[\mu(t)|\mathcal{G}_t]$ is the optimal filter of $\mu(t)$, and the innovation process $\bar{W}(\cdot)$ given by
\begin{equation}\label{innovation}
d\bar{W}(t):={\frac{1}{{\sigma}(t)}}\Big[\frac{dS(t)}{S(t)}-m(t)dt\Big]
\end{equation}
is a Brownian motion with respect to $\mathbb{P}$ and $\{\mathcal{G}_t\}_{0\le t\le T}$.
\end{myle}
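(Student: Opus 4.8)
The assertion is essentially the innovations approach to nonlinear filtering combined with a purely algebraic rewriting of the state equation, and I would organise the argument in three stages. \emph{First, reduction of the observation.} Setting $R(t):=\int_0^t S(u)^{-1}\,dS(u)$, equation (\ref{asset}) gives $dR(t)=\mu(t)\,dt+\sigma(t)\,dW^1(t)$, and since $\sigma(\cdot)$ is deterministic with bounded inverse we have $\mathcal{F}_t^S=\mathcal{F}_t^R$, hence $\mathcal{G}_t=\mathcal{F}_t^{W^0}\otimes\mathcal{F}_t^R$. Because the linear SDE (\ref{rate of return}) has deterministic coefficients, $\sup_{0\le t\le T}\mathbb{E}[\mu(t)^2]<\infty$, so $m(t)=\mathbb{E}[\mu(t)\mid\mathcal{G}_t]$ is a well-defined square-integrable $\mathcal{G}_t$-adapted process, $\bar W$ in (\ref{innovation}) is well-defined, and $\bar W(t)=W^1(t)+\int_0^t\sigma(s)^{-1}\bigl(\mu(s)-m(s)\bigr)\,ds$.

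\emph{Second, the innovation is a $\mathcal{G}_t$-Brownian motion;} this is the technical core. I would invoke L\'evy's characterization: $\bar W$ is continuous and $\mathcal{G}_t$-adapted by construction, and $\langle\bar W\rangle_t=\langle W^1\rangle_t=t$ since $\bar W$ differs from $W^1$ by a process of finite variation. For the $\mathcal{G}_t$-martingale property, for $s<t$ and $A\in\mathcal{G}_s$ I would split $\mathbb{E}\bigl[\mathbf 1_A(\bar W(t)-\bar W(s))\bigr]$ into $\int_s^t\sigma(r)^{-1}\mathbb{E}\bigl[\mathbf 1_A(\mu(r)-m(r))\bigr]\,dr$, which vanishes by Fubini and the tower property because $\mathbf 1_A\in\mathcal{G}_s\subset\mathcal{G}_r$, plus $\mathbb{E}\bigl[\mathbf 1_A(W^1(t)-W^1(s))\bigr]$; the latter equals $0$ because $\mathcal{F}_s^S\subset\mathcal{F}_s^{W^1}\otimes\mathcal{F}_s^{W^2}$ gives $\mathcal{G}_s\subset\mathcal{F}_s^{W^0}\otimes\mathcal{F}_s^{W^1}\otimes\mathcal{F}_s^{W^2}$, which is independent of the increment $W^1(t)-W^1(s)$ by the mutual independence and independent-increments properties of $W^0,W^1,W^2$. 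The same independence argument shows $W^0$ is still a $\mathcal{G}_t$-Brownian motion and is independent of $\bar W$, a fact that will be used for the LQ treatment in the next subsection. Alternatively one may cite directly the separation and filtering result of Xiong and Zhou \cite{XiongZhou2007}.

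\emph{Third, rewriting the wealth equation.} From (\ref{innovation}), $\pi(t)\sigma(t)\,d\bar W(t)=\pi(t)\bigl(dR(t)-m(t)\,dt\bigr)=\pi(t)\bigl(\mu(t)-m(t)\bigr)\,dt+\pi(t)\sigma(t)\,dW^1(t)$, whence $(m(t)-r(t))\pi(t)\,dt+\pi(t)\sigma(t)\,d\bar W(t)=(\mu(t)-r(t))\pi(t)\,dt+\pi(t)\sigma(t)\,dW^1(t)$. Substituting this identity into (\ref{wealth/suplus}) and using $\eta=\theta$, so that the constant $a(\theta-\eta)$ drops out and $a\theta-a\eta+a\eta q(t)=a\eta q(t)$, gives exactly (\ref{wealth/suplus-new}); note $X(\cdot)$ literally satisfies both SDEs, since the substitution is an identity between differentials. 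The only genuine obstacle is the martingale verification in the second stage; everything else is bookkeeping once $\bar W$ is known to be a $\mathcal{G}_t$-Brownian motion.
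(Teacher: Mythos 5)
Your proposal is correct. The paper does not actually prove Lemma 3.1 --- it states the result and defers entirely to Xiong and Zhou \cite{XiongZhou2007} --- and what you have written is precisely the standard innovations argument that the citation supplies: reduce the observation to the return process $R$, show $\bar W = W^1 + \int_0^\cdot \sigma^{-1}(\mu-m)\,ds$ is a continuous $\mathcal{G}_t$-martingale with quadratic variation $t$ via the tower property and the independence of the increments of $W^1$ from $\mathcal{F}_s$, invoke L\'evy's characterization, and then substitute the identity $(m-r)\pi\,dt+\pi\sigma\,d\bar W=(\mu-r)\pi\,dt+\pi\sigma\,dW^1$ into the wealth equation with $\eta=\theta$. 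Your observation that $(W^0,\bar W)$ remains a two-dimensional $\mathcal{G}_t$-Brownian motion (via $\langle W^0,\bar W\rangle=\langle W^0,W^1\rangle=0$) is a point the paper uses implicitly in Section 3.2 without comment, so it is good that you flagged it. Two minor housekeeping items you could make explicit: square-integrability of $m$ follows from conditional Jensen and the $L^2$ bound on $\mu$; and the inclusion $\mathcal{F}_s^S\subset\mathcal{F}_s^{W^1}\otimes\mathcal{F}_s^{W^2}$ tacitly assumes $\mu(0)$ is $\mathcal{F}_0$-measurable (or independent of the driving noises, in which case the independence argument goes through after enlarging $\mathcal{F}_s$ by $\sigma(\mu(0))$) --- a wrinkle inherited from the paper's own setup rather than a gap in your argument.
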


Next, we study the filtering problem for the rate of return process $\mu(\cdot)$. According to Theorem 11.1 in Liptser and Shiryaev \cite{LS1977}, we obtain the following lemma.

\newtheorem*{mylem}{Lemma 3.2}
\begin{mylem}
We denote $n(t)\equiv\mathbb{E}[(\mu(t)-m(t))^2|\mathcal{G}_t]$. Let the conditional distribution $F_{\mathcal{G}_0}(x)=\mathbb{P}({\mu}(0)\le x|\mathcal{G}_0) $ be Gaussian, $N(m(0),n(0))$, with $0\le n(0)<\infty$. Then the conditional distributions $F_{\mathcal{G}_t}(x)=\mathbb{P}(\mu(t)\le x|\mathcal{G}_t) $ be Gaussian, $N(m(t),n(t))$, for all $t$.
\end{mylem}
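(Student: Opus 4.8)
The plan is to recognize $(\mu(\cdot),S(\cdot))$ as a linear, conditionally Gaussian signal--observation pair and to invoke the Kalman--Bucy type filtering result, Theorem 11.1 of Liptser and Shiryaev \cite{LS1977}, as the paper already announces. The first step is to reduce the observation filtration. Since $W^0(\cdot)$ is independent of $(W^1(\cdot),W^2(\cdot))$ (and, as is standard, of the initial value $\mu(0)$), while both $\mu(\cdot)$ and $S(\cdot)$ are measurable functionals of $(\mu(0),W^1(\cdot),W^2(\cdot))$, the $\sigma$-field $\mathcal{F}_t^{W^0}$ is independent of $\sigma\big(\mu(s),S_u:s,u\in[0,t]\big)$. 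Hence $\mathbb{E}[g(\mu(t))|\mathcal{G}_t]=\mathbb{E}[g(\mu(t))|\mathcal{F}_t^S]$ for every bounded Borel function $g$, so that $F_{\mathcal{G}_t}=F_{\mathcal{F}_t^S}$: the insurance-claim randomness contributes nothing to the conditional law of $\mu(t)$. Moreover, from $\frac{dS(t)}{S(t)}=\mu(t)dt+\sigma(t)dW^1(t)$ one has $\mathcal{F}_t^S=\mathcal{F}_t^Y$, where $Y(t):=\int_0^t\frac{dS(s)}{S(s)}=\int_0^t\mu(s)ds+\int_0^t\sigma(s)dW^1(s)$ will serve as the observation process.

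The second step is to check the hypotheses of Theorem 11.1 for the pair (signal $\mu$, observation $Y$): the signal equation (\ref{rate of return}) is affine in $\mu$ with deterministic, bounded coefficients $h,l,z$; the observation equation $dY(t)=\mu(t)dt+\sigma(t)dW^1(t)$ is affine in $\mu$ with deterministic coefficient $\sigma$; the observation noise is uniformly non-degenerate because $\sigma(\cdot)$ and $\sigma(\cdot)^{-1}$ are bounded; the integrability requirements ($\int_0^T(|h(t)|+l(t)^2+z(t)^2+\sigma(t)^2)dt<\infty$ and $\mathbb{E}\mu(0)^2<\infty$) follow from the standing boundedness assumptions together with $n(0)<\infty$; and, conditionally on $\mathcal{G}_0$, $\mu(0)$ is Gaussian $N(m(0),n(0))$ and independent of the future Brownian increments. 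The only non-textbook feature is that $W^1(\cdot)$ drives both the signal and the observation, so the signal and observation noises are correlated, with cross-coefficient $l(t)\sigma(t)$; this is exactly the general conditionally Gaussian situation covered by Theorem 11.1, so it demands only careful bookkeeping of the cross term rather than any new argument.

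The third step is to conclude. By Theorem 11.1 of \cite{LS1977}, $(\mu(t),Y(t))$ is conditionally Gaussian given $\mathcal{F}_t^Y$; in particular $\mathbb{P}(\mu(t)\le x|\mathcal{F}_t^Y)$ is Gaussian with mean $m(t)=\mathbb{E}[\mu(t)|\mathcal{F}_t^Y]$ and variance $n(t)=\mathbb{E}[(\mu(t)-m(t))^2|\mathcal{F}_t^Y]$, and by Step 1 this is the same as $N(m(t),n(t))$ conditional on $\mathcal{G}_t$, which is the assertion. As a by-product, the theorem also yields that $n(\cdot)$ is \emph{deterministic} and solves the Riccati ODE $\dot n(t)=2h(t)n(t)+l(t)^2+z(t)^2-\sigma(t)^{-2}\big(l(t)\sigma(t)+n(t)\big)^2$ with the prescribed initial value $n(0)$, while $m(\cdot)$ satisfies a linear SDE driven by the innovation $\bar{W}(\cdot)$ of Lemma 3.1; these identities would be recorded for later use.

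I expect the main delicate points to be, first, the filtration reduction of Step 1 --- carefully justifying that observing $W^0(\cdot)$ adds nothing to the filter of $\mu(t)$, which rests on the independence built into the model --- and, second, confirming that the correlated-noise version of the conditionally Gaussian filtering theorem is the one being used, so that the cross term $l\sigma$ enters the filter gain and the Riccati equation correctly. Beyond these, the proof is a direct application of \cite{LS1977}.
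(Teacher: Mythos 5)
Your proposal is correct and follows the same route as the paper, which offers no proof of this lemma beyond citing Theorem 11.1 of Liptser and Shiryaev; your invocation of that theorem, with the correlated-noise cross term $l(t)\sigma(t)$ matching the Riccati equation in (\ref{filtering equation}), is exactly what is intended. Your Step 1 reduction from $\mathcal{G}_t$ to $\mathcal{F}_t^S$ via the independence of $W^0(\cdot)$ is a detail the paper leaves implicit (it simply assumes $\mathcal{F}_t^{W^0}$ and $\mathcal{F}_t^S$ independent) but is needed to apply the theorem as stated, so it is a welcome addition rather than a deviation.
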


Thus, $m(\cdot)$ is the optimal estimate after obtaining the information $\{\mathcal{G}_t\}_{0\le t\le T}$. According to Theorem 12.1 in Liptser and Shiryaev \cite{LS1977}, optimal estimates $m(\cdot)$ and $n(\cdot)$ can be obtained in the following lemma.

\newtheorem*{myla}{Lemma 3.3}
\begin{myla}
Suppose stochastic processes $\mu(\cdot)$ and $S(\cdot)$ satisfy
\begin{equation}\label{muS}
\left\{
\begin{aligned}
	&d\mu(t)=h(t)\mu(t)dt+l(t)dW^1(t)+z(t)dW^2(t),\ t\in[0,T],\\
	&dS(t)=\mu(t)S(t)dt+\sigma(t)S(t)dW^1(t),\ t\in[0,T].
\end{aligned}
\right.
\end{equation}
Then the optimal estimates $m(\cdot)$ and $n(\cdot)$ satisfy
\begin{equation}\label{filtering equation}
\left\{
\begin{aligned}
	&dm(t)=h(t)m(t)dt+\Big[l(t)+\frac{n(t)}{\sigma(t)}\Big]\frac{1}{\sigma(t)}\Big[{\frac{dS(t)}{S(t)}}-m(t)dt\Big],\ t\in[0,T],\\
	&\dot{n}(t)=2h(t)n(t)+l(t)^2+z(t)^2-\Big[l(t)+{\frac{n(t)}{\sigma(t)}}\Big]^2,\ t\in[0,T].
\end{aligned}
\right.
\end{equation}
\end{myla}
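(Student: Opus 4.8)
The plan is to apply the Kalman–Bucy filtering machinery in the form given by Theorems~11.1 and 12.1 of Liptser and Shiryaev \cite{LS1977} to the conditionally Gaussian system \eqref{muS}. The first step is to rewrite the observation process in its standard form: since $\sigma(\cdot)$ is bounded away from zero, I would introduce $Y(t):=\int_0^t \frac{1}{\sigma(s)}\,\frac{dS(s)}{S(s)}$ (equivalently $dY(t)=\frac{1}{\sigma(t)}\mu(t)\,dt + dW^1(t)$), so that $Y(\cdot)$ is an observation process driven by the ``signal'' $\mu(\cdot)$ with unit diffusion coefficient and with observable drift coefficient $\frac{1}{\sigma(t)}$. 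The filtration generated by $Y(\cdot)$ coincides with $\mathcal{F}_t^S$, because $\sigma$ is a known deterministic function. Then the pair $(\mu,Y)$ is exactly of the linear conditionally Gaussian type covered by the cited theorems, with signal coefficients $h(t)$ (drift), $l(t)$ and $z(t)$ (the two noise coefficients, the first of which is correlated with the observation noise $W^1$), and observation coefficients $A(t)=\frac{1}{\sigma(t)}$ (drift) and $B(t)=1$ (noise).

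The second step is to verify the hypotheses needed to invoke the theorems: boundedness / integrability of all coefficients $h,l,z,\sigma^{-1}$ on $[0,T]$ (which holds since $h,l,z$ are assumed deterministic and, implicitly, locally bounded, and $\sigma^{-1}$ is bounded by assumption), the Gaussian initial condition $N(m(0),n(0))$ with finite variance, and the independence structure of the Brownian motions. Given these, Theorem~11.1 yields that the conditional law of $\mu(t)$ given $\mathcal{G}_t$ — equivalently given $\mathcal{F}_t^S$, since $\mathcal{F}_t^{W^0}$ is independent of everything relevant to the $(\mu,S)$ system — remains Gaussian $N(m(t),n(t))$ for all $t$; this is the content of Lemma~3.2 and it is a prerequisite for the closed-form equations. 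The third step is then the direct specialization of the general filtering equations of Theorem~12.1: the conditional mean satisfies $dm(t)= h(t)m(t)\,dt + \big(\,[\text{cross term}] + n(t)A(t)\,\big) B(t)^{-2}\big(dY(t)-A(t)m(t)\,dt\big)$, where the cross term between signal noise and observation noise is $\langle l\,dW^1, dW^1\rangle = l(t)\,dt$; translating $A(t)=\sigma(t)^{-1}$, $B(t)=1$, and $dY(t)=\sigma(t)^{-1}\,\frac{dS(t)}{S(t)}$ gives precisely the first equation in \eqref{filtering equation}. The variance equation is the corresponding Riccati ODE: $\dot n(t)= 2h(t)n(t) + \big(l(t)^2+z(t)^2\big) - B(t)^{-2}\big(l(t)B(t)+n(t)A(t)\big)^2$, which with $A=\sigma^{-1}$, $B=1$ becomes exactly the second equation in \eqref{filtering equation}. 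The innovation process $dY(t)-\sigma(t)^{-1}m(t)\,dt = \sigma(t)^{-1}\big(\frac{dS(t)}{S(t)}-m(t)\,dt\big) = d\bar W(t)$ from \eqref{innovation} is a $(\mathbb{P},\mathcal{G}_t)$-Brownian motion, consistent with Lemma~3.1.

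The main obstacle I anticipate is not analytic but bookkeeping: correctly identifying the correlation between the signal noise and the observation noise. Because $\mu(\cdot)$ and $S(\cdot)$ are both driven by the \emph{same} Brownian motion $W^1(\cdot)$ (with coefficients $l(t)$ and $\sigma(t)$ respectively), this is the ``correlated noise'' case of the Kalman–Bucy filter, and the gain term acquires the extra summand $l(t)$ alongside $n(t)/\sigma(t)$ — this is exactly why the bracket $[\,l(t)+n(t)/\sigma(t)\,]$ appears rather than just $n(t)/\sigma(t)$. One must be careful to normalize the observation so that its diffusion coefficient is constant before reading off the cross-variation, and to check that the second noise $z(t)\,dW^2(t)$ in the signal, being independent of the observation, contributes only to the variance equation (through $z(t)^2$) and not to the gain. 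Once these identifications are pinned down, the result is an immediate transcription of the cited general theorems, so no substantive new estimates are required.
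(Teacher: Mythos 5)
Your proposal is correct and follows the same route as the paper, which simply invokes Theorem 12.1 of Liptser and Shiryaev for the conditionally Gaussian system; your coefficient identification ($A=\sigma^{-1}$, $B=1$, signal noises $l$ and $z$ with the $l\,dW^1$ term correlated with the observation noise) and the resulting gain $\bigl[l(t)+n(t)/\sigma(t)\bigr]$ and Riccati equation match the paper's equations exactly. In fact you supply more detail (the normalization of the observation and the verification of hypotheses) than the paper itself does.
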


From the above lemma, we can also obtain
\begin{equation}\label{Sm}
\left\{
\begin{aligned}
&dS(t)=m(t)S(t)dt+\sigma(t)S(t)d\bar{W}(t),\ t\in[0,T],\\
&dm(t)=h(t)m(t)dt+\Big[l(t)+\frac{n(t)}{\sigma(t)}\Big]d\bar{W}(t),\ t\in[0,T].
\end{aligned}
\right.
\end{equation}

\subsection{Solution to the Stochastic LQ Control Problem}

In this subsection, we derive the efficient strategies and efficient frontier via solutions to two extended stochastic Riccati equations. According to Lemma 3.1, we can solve the resulting problem as in the full information case.

First, equation (\ref{wealth/suplus-new}) can be rewritten as the following linear SDE:

\begin{equation}\label{linearSDE}
\left\{
\begin{aligned}
dX(t)=&\ [r(t)X(t)+B(t)u(t)]dt+u(t)^\prime D(t)dW(t),\ t\in[0,T],\\
 X(0)=&\ x_0,
\end{aligned}
\right.
\end{equation}
where $u(\cdot)\equiv(q(\cdot),\pi(\cdot))^\prime\in\mathcal{U}^P_{ad}[0,T]$, $W(\cdot)\equiv(W^0(\cdot),\bar{W}(\cdot))^\prime$ and
\begin{equation*}
\begin{aligned}
B(t)\equiv\left(a\eta,m(t)-r(t)\right),\ D(t)\equiv(D^1(t),D^2(t))^\prime,\ D^1(t)\equiv\left(b,0\right),\ D^2(t)\equiv\left(0,\sigma(t)\right).
\end{aligned}
\end{equation*}

This problem is exactly a stochastic LQ model with random coefficients and the control variables is constrained. Similar to the results by Hu and Zhou \cite{HuZhou2005}, efficient strategies and efficient frontier could be presented in closed form via solutions to two extended stochastic Riccati equations.

Now we introduce the following two nonlinear backward stochastic differential equations (BSDEs, for short):
\begin{equation}\label{Riccati+}
\left\{
\begin{aligned}
dP_+(t)&=-\left[2r(t)P_+(t)+H^*_+\big(t,P_+(t),\Lambda_+(t)\big)\right]dt+\Lambda_+(t)^\prime dW(t),\ t\in[0,T],\\
 P_+(T)&=1,\\
 P_+(t)&>0,
\end{aligned}
\right.
\end{equation}
\begin{equation}\label{Riccati-}
\left\{
\begin{aligned}
dP_-(t)&=-\left[2r(t)P_-(t)+H^*_-\big(t,P_-(t),\Lambda_-(t)\big)\right]dt+\Lambda_-(t)^\prime dW(t),\ t\in[0,T],\\
 P_-(T)&=1,\\
 P_-(t)&>0,
\end{aligned}
\right.
\end{equation}
where
\begin{equation}\label{Hamiltonian}
\left\{
\begin{aligned}
 H_+^*(t,P,\Lambda):=\min_{u(\cdot)\in\mathcal{U}^P_{ad}[0,T]}\big\{u^\prime PD(t)D(t)^\prime u+2u^\prime\left[B(t)^\prime P+D(t)\Lambda\right]\big\}, \\
 H_-^*(t,P,\Lambda):=\min_{u(\cdot)\in\mathcal{U}^P_{ad}[0,T]}\big\{u^\prime PD(t)D(t)^\prime u-2u^\prime\left[B(t)^\prime P+D(t)\Lambda\right]\big\}.
\end{aligned}
\right.
\end{equation}
Also, define
\begin{equation}\label{argmin}
\left\{
\begin{aligned}
\xi_+(t,P,\Lambda)&:=\operatorname{argmin}_{u(\cdot)\in\mathcal{U}^P_{ad}[0,T]}\big\{u^\prime PD(t)D(t)^\prime u+2u^\prime\left[B(t)^\prime P+D(t)\Lambda\right]\big\},\\
\xi_-(t,P,\Lambda)&:=\operatorname{argmin}_{u(\cdot)\in\mathcal{U}^P_{ad}[0,T]}\big\{u^\prime PD(t)D(t)^\prime u-2u^\prime\left[B(t)^\prime P+D(t)\Lambda\right]\big\},\\
                  &\qquad\qquad\qquad\qquad\qquad (t,P,\Lambda)\in[0,T]\times\mathbb{R}\times\mathbb{R}.
\end{aligned}
\right.
\end{equation}

Similar to Theorem 5.2 in \cite{HuZhou2005}, we see that (\ref{Riccati+}) and (\ref{Riccati-}) admit unique bounded, uniformly positive solutions $P_+(\cdot)$ and $P_-(\cdot)$, respectively.

Since (\ref{mean-variance}) is a convex optimization problem, the equality constraint $\mathbb{E}X(T)=d$ can be dealt with by introducing a Lagrange multiplier $\gamma\in \mathbb{R}$. In this way the problem (\ref{mean-variance}) can be solved via the following stochastic control problem (for every fixed $\gamma$). Define
\begin{equation}\label{cost functional}
\begin{aligned}
J\left(x_0,u(\cdot),\gamma\right)&:=\mathbb{E}\left\{X(T)^2-d^2-2\gamma[X(T)-d]\right\}\\
                                 &=\mathbb{E}\left[|X(T)-\gamma|^2\right]-(\gamma-d)^2, \quad \gamma\in\mathbb{R}.
\end{aligned}
\end{equation}
Based on Lagrange duality theorem (see Luenberger \cite{Luenberger1969}), we may first solve the following unconstrained problem parameterized by the Lagrange multiplier $\gamma\in\mathbb{R}$:
\begin{equation}\label{Lagrange}
\left\{
\begin{aligned}
\text{Minimize\ } & J\left(x_0,u(\cdot),\gamma\right):=\mathbb{E}\left[|X(T)-\gamma|^2\right]-(\gamma-d)^2, \\
\text{subject to:\ } & {(X(\cdot),u(\cdot))\text{ is admissible for }(\ref{linearSDE}).}
\end{aligned}
\right.
\end{equation}

We now consider the state feedback control for the problem (\ref{Lagrange}). For any real number $x$ we define $x^+:=\max\{x,0\}$ and $x^-:=\max\{-x,0\}$.

\newtheorem*{myth}{Theorem 3.4}
\begin{myth}
Let $(P_+(\cdot),\Lambda_+(\cdot))$ and $(P_-(\cdot),\Lambda_-(\cdot)) $ be the unique bounded, uniformly positive solutions to the BSDEs (\ref{Riccati+}) and (\ref{Riccati-}), respectively. Then the state feedback control
\begin{equation}\label{optimal control}
\begin{aligned}
u^{*}(t)=&\ \xi_+\left(t,P_+(t),\Lambda_+(t)\right)\left(X(t)-\gamma e^{-\int_t^Tr(s)ds}\right)^+\\
         &+\xi_-\left(t,P_-(t),\Lambda_-(t)\right)\left(X(t)-\gamma e^{-\int_t^Tr(s)ds}\right)^-
\end{aligned}
\end{equation}
is optimal for the problem (\ref{Lagrange}). Moreover, in this case the optimal cost is
\begin{equation}\label{optimal cost}
\hspace{-1mm}\begin{aligned}
&J^*\left(x_0,\gamma\right):=\inf _{u(\cdot)\in\mathcal{U}^P_{ad}[0,T]}J\left(x_0,u(\cdot),\gamma\right)\\
&=\left\{
\begin{array}{r}
\left[P_+(0)e^{-2\int_0^Tr(s)ds}-1\right]\gamma^2-2\left[x_0P_+(0)e^{-\int_0^Tr(s)ds}-d\right]\gamma+P_+(0)x_0^2-d^2,\\
\text{ if }x_0>\gamma e^{-\int_0^Tr(s)ds},\\
\left[P_-(0)e^{-2\int_0^Tr(s)ds}-1\right]\gamma^2-2\left[x_0P_-(0)e^{-\int_0^Tr(s)ds}-d\right]\gamma+P_-(0)x_0^2-d^2,\\
\text{ if }x_0\leq\gamma e^{-\int_0^Tr(s)ds}.
\end{array}
\right.
\end{aligned}
\end{equation}	
\end{myth}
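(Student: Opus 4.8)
The plan is a completion-of-squares / verification argument adapted to the cone constraint, in the spirit of Hu and Zhou \cite{HuZhou2005}; throughout, the $\min$ and $\operatorname{argmin}$ in (\ref{Hamiltonian})--(\ref{argmin}) are read pointwise over the closed convex cone $[0,\infty)^2$ in which the values of admissible controls lie.

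\textbf{Step 1 (reduction to a homogeneous state).} First I would remove the affine part of the cost by setting $Y(t):=X(t)-\gamma e^{-\int_t^Tr(s)ds}$. Since $\tfrac{d}{dt}e^{-\int_t^Tr(s)ds}=r(t)e^{-\int_t^Tr(s)ds}$, the process $Y(\cdot)$ solves the homogeneous linear SDE
\[
dY(t)=[r(t)Y(t)+B(t)u(t)]\,dt+u(t)^\prime D(t)\,dW(t),\qquad Y(0)=x_0-\gamma e^{-\int_0^Tr(s)ds},
\]
and $Y(T)=X(T)-\gamma$. Hence $J(x_0,u(\cdot),\gamma)=\mathbb{E}[Y(T)^2]-(\gamma-d)^2$, so it suffices to minimize $\mathbb{E}[Y(T)^2]$ over $u(\cdot)\in\mathcal{U}^{P}_{ad}[0,T]$.

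\textbf{Step 2 (candidate value process and the It\^o--Tanaka expansion).} Take $V(t,y):=P_+(t)(y^+)^2+P_-(t)(y^-)^2$; since $P_\pm(T)=1$, $V(T,y)=y^2$. Because $y\mapsto(y^+)^2$ and $y\mapsto(y^-)^2$ are $C^1$ with Lipschitz first derivatives, their distributional second derivatives $2\mathbf{1}_{\{y>0\}}$ and $2\mathbf{1}_{\{y<0\}}$ are bounded with no atomic part, so the It\^o--Tanaka formula carries no local-time term and the ordinary It\^o formula applies to $V(t,Y(t))$. Applying it together with the BSDE dynamics (\ref{Riccati+})--(\ref{Riccati-}), the quadratic variation $d\langle Y\rangle_t=u(t)^\prime D(t)D(t)^\prime u(t)\,dt$, and the cross variations $d\langle P_\pm,(Y^\pm)^2\rangle_t=\pm 2Y_t^\pm u(t)^\prime D(t)\Lambda_\pm(t)\,dt$, the terms carrying $r(t)$ cancel and one obtains
\[
V(T,Y(T))=V(0,Y(0))+\int_0^T\Theta(t)\,dt+\int_0^T\Psi(t)^\prime\,dW(t)
\]
for a suitable $\mathbb{R}^2$-valued process $\Psi$, with $\Theta(t)=0$ on $\{Y_t=0\}$ and, on $\{Y_t\ne0\}$ (writing $v:=u(t)/Y_t$ and $w:=-u(t)/Y_t$, both in $[0,\infty)^2$),
\[
\Theta(t)=(Y_t^+)^2\big[v^\prime P_+DD^\prime v+2v^\prime(B^\prime P_++D\Lambda_+)-H_+^*\big]+(Y_t^-)^2\big[w^\prime P_-DD^\prime w-2w^\prime(B^\prime P_-+D\Lambda_-)-H_-^*\big],
\]
all coefficients evaluated at $t$. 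By the definition of $H_\pm^*$ each bracket is nonnegative, so $\Theta\ge0$, and $\Theta(t)=0$ iff $v=\xi_+(t,P_+(t),\Lambda_+(t))$ (resp.\ $w=\xi_-(t,P_-(t),\Lambda_-(t))$) off $\{Y_t=0\}$, i.e.\ iff $u(t)$ is the feedback (\ref{optimal control}), since $Y_t^\pm=(X(t)-\gamma e^{-\int_t^Tr(s)ds})^\pm$ and $Y_t^+\xi_++Y_t^-\xi_-\in[0,\infty)^2$.

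\textbf{Step 3 (verification and the closed-form cost).} Taking expectations after a localization (stop at $\tau_n\wedge T$, then let $n\to\infty$ using boundedness of $P_\pm$, $\Lambda_\pm\in L^2_{\mathcal{G}}(0,T)$, $u\in\mathcal{U}^{P}_{ad}[0,T]$, and $\mathbb{E}\sup_{t\le T}|Y(t)|^2<\infty$) removes the stochastic integral and gives
\[
\mathbb{E}[Y(T)^2]=\mathbb{E}[V(T,Y(T))]\ge V(0,Y(0))=P_+(0)(Y(0)^+)^2+P_-(0)(Y(0)^-)^2,
\]
with equality exactly for $u=u^*$. Therefore $J^*(x_0,\gamma)=V(0,Y(0))-(\gamma-d)^2$; substituting $Y(0)=x_0-\gamma e^{-\int_0^Tr(s)ds}$, distinguishing $x_0>\gamma e^{-\int_0^Tr(s)ds}$ from $x_0\le\gamma e^{-\int_0^Tr(s)ds}$, and expanding the square gives exactly (\ref{optimal cost}).

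\textbf{Step 4 (admissibility of the feedback --- the main obstacle).} It remains to show that $u^*(\cdot)$ in (\ref{optimal control}) is admissible, i.e.\ $\mathcal{G}_t$-progressively measurable with $\mathbb{E}\int_0^T|u^*(t)|^2dt<\infty$, and that the closed-loop version of (\ref{linearSDE}) is well posed. Measurability of $(t,P,\Lambda)\mapsto\xi_\pm$ follows from uniqueness and continuity of the minimizer of a strictly convex quadratic over the fixed cone, strict convexity being guaranteed by $P_\pm>0$ and positive definiteness of $D(t)D(t)^\prime=\mathrm{diag}(b^2,\sigma(t)^2)$ (using boundedness of $\sigma^{-1}$). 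The delicate point is that the feedback gains satisfy only $|\xi_\pm(t,P_\pm(t),\Lambda_\pm(t))|\le C(1+|\Lambda_\pm(t)|)$, hence are square-integrable but not bounded; establishing existence, uniqueness and the moment estimate for the resulting linear SDE with $L^2$ random coefficients is where the real work lies, and I would carry it out exactly as in Hu and Zhou \cite{HuZhou2005}, exploiting the uniform positivity of $P_\pm$. The remainder is bookkeeping.
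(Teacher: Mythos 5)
Your proposal is correct and follows essentially the same route as the paper: the paper's proof consists of exactly your Step 1 (the substitution $y(t)=X(t)-\gamma e^{-\int_t^T r(s)ds}$, which turns the problem into a homogeneous cone-constrained stochastic LQ problem) followed by a direct appeal to Theorem 5.1 of Hu and Zhou for the feedback form and the value $P_+(0)[(y_0)^+]^2+P_-(0)[(y_0)^-]^2-(\gamma-d)^2$. Your Steps 2--4 simply unpack that cited theorem --- the It\^o--Tanaka completion of squares with $V(t,y)=P_+(t)(y^+)^2+P_-(t)(y^-)^2$ and the admissibility of the $L^2$ feedback gains --- and they correctly identify where the real work in Hu--Zhou lies, so the argument is sound.
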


\begin{proof}
Set
\begin{equation*}
y(t):=X(t)-\gamma e^{-\int_t^Tr(s)ds}.
\end{equation*}
It turns out the wealth equation (\ref{linearSDE}) in terms of $y(\cdot)$ has exactly the following same form except for the initial condition:
\begin{equation}\label{yt}
\left\{
\begin{aligned}
dy(t)&=[r(t)y(t)+B(t)u(t)]dt+u(t)^\prime D(t)dW(t),\ t\in[0,T],\\
 y(0)&=x_0-\gamma e^{-\int_0^Tr(s)ds},
\end{aligned}
\right.
\end{equation}
whereas the cost functional (\ref{cost functional}) can be rewritten as
\begin{equation}\label{cost functional-y}
J\left(y_0,u(\cdot),\gamma\right)=\mathbb{E}y(T)^2-(\gamma-d)^2.
\end{equation}
The above problem (\ref{yt})-(\ref{cost functional-y}) is exactly a stochastic LQ control problem with random coefficients and the control variable is constrained. Hence the optimal feedback control (\ref{optimal control}) follows from Theorem 5.1 in \cite{HuZhou2005}. Finally, the optimal cost is
\begin{equation*}
J^*\left(x_0,\lambda\right)=P_+(0)\left[\left(x_0-\gamma e^{-\int_0^Tr(s)ds}\right)^+\right]^2+P_-(0)\left[\left(x_0-\gamma e^{-\int_0^Tr(s)ds}\right)^-\right]^2-(\gamma-d)^2,
\end{equation*}
which equals the right-hand side of (\ref{optimal cost}) after some simple manipulations. The proof is complete.
\end{proof}

\newtheorem*{th1}{Theorem 3.5}
\begin{th1}
The efficient strategies corresponding to  $d\ge d_0$, where $d_0:=x_0e^{\int_0^Tr(s)ds}$, as a feedback of the wealth process, is
\begin{equation}\label{no gamma}
\begin{aligned}
u^*(t)=&\ \xi_+\left(t,P_+(t),\Lambda_+(t)\right)\left(X^*(t)-\gamma^*e^{-\int_t^Tr(s)ds}\right)^+\\
       &+\xi_-\left(t,P_-(t),\Lambda_-(t)\right)\left(X^*(t)-\gamma^*e^{-\int_t^Tr(s)ds}\right)^-,
\end{aligned}
\end{equation}
where
\begin{equation}\label{gamma*}
\gamma^*:=\frac{d-x_0P_-(0)e^{-\int_0^Tr(s)ds}}{1-P_-(0)e^{-2\int_0^Tr(s)ds}}.
\end{equation}
Moreover, the efficient frontier is
\begin{equation}\label{efficient frontier}
\operatorname{Var}[x^*(T)]=\frac{P_-(0)e^{-2\int_0^Tr(s)ds}}{1-P_-(0)e^{-2\int_0^Tr(s)ds}}\left[\mathbb{E}X^*(T)-x_0e^{\int_0^Tr(s)ds}\right]^2,\ \mathbb{E}X^*(T)\geq d_0.
\end{equation}
\end{th1}

\begin{proof}
First, if $d=x_0e^{\int_0^Tr(s)ds}$, then it is straightforward that the corresponding efficient strategies is $u^*(t)\equiv(0,0)^\prime$. The resulting wealth process is $X^*(t)=x_0e^{\int_0^Tr(s)ds}$. On the other hand, in this case the associated $\gamma^*=x_0e^{\int_0^Tr(s)ds}$. This implies that (\ref{efficient frontier}) is indeed the efficient frontier when $d=x_0e^{\int_0^Tr(s)ds}$.

So we need only to prove the theorem for any fixed  $d > x_0e^{\int_0^Tr(s)ds}$. Applying the Lagrange duality theorem again, we have
\begin{equation}\label{Lagrange duality theorem}
J_\mathrm{MV}^*\left(x_0\right):=\inf_{u(\cdot)\in\mathcal{U}^P_{ad}[0,T]}J_\mathrm{MV}\left(x_0,u(\cdot)\right)
:=\sup_{\gamma\in\mathbb{R}}\inf_{u(\cdot)\in\mathcal{U}^P_{ad}[0,T]}J\left(x_0,u(\cdot),\gamma\right)>-\infty,
\end{equation}
and the optimal feedback control for (\ref{mean-variance}) is (\ref{optimal control}), with $\gamma$ replaced by $\gamma^*$ in (\ref{gamma*}) which maximizes $J^*\left(x_0,\gamma\right)$ over $\gamma\in\mathbb{R}$, due to Theorem 3.4.

If $\gamma<x_0e^{\int_0^Tr(s)ds}$, then the expression (\ref{optimal cost}) and the fact that $d\ge x_0e^{\int_0^Tr(s)ds}$, give
\begin{equation*}
\begin{aligned}
 &\frac{\partial}{\partial\gamma}J^*\left(x_0,\gamma\right)=2\left[P_+(0)e^{-2\int_0^Tr(s)ds}-1\right]\gamma-2\left[x_0P_+(0)e^{-\int_0^Tr(s)ds}-d\right]\\
 &\geq 2\left[P_+(0)e^{-2 \int_0^Tr(s)ds}-1\right]x_0e^{\int_0^Tr(s)ds}-2\left[x_0P_+(0)e^{-\int_0^Tr(s)ds}-x_0e^{\int_0^Tr(s)ds}\right]=0.
\end{aligned}
\end{equation*}
In the above calculation, we have used the result $P_+(0)e^{-2 \int_0^Tr(s)ds}-1 \le 0$ in Lemma 6.1 of \cite{HuZhou2005}.
Hence,
\begin{equation*}
\sup_{\gamma\in\mathbb{R}}J^*\left(x_0,\gamma\right)=\sup_{\gamma\in\left(-\infty,x_0e^{\int_0^Tr(s)ds}\right)} J^{*}\left(x_0,\gamma\right).
\end{equation*}

But for $\gamma\geq x_0e^{\int_0^Tr(s)ds}$, it follows from (\ref{optimal cost}) that $J^*\left(x_0,\gamma\right)$ is a quadratic function in $\gamma$ whose maximizer is given by (\ref{gamma*}), whereas
\begin{equation*}
\begin{aligned}
&J_\mathrm{MV}^*\left(x_0\right)=\sup_{\gamma\in\mathbb{R}}\inf_{u(\cdot)\in\mathcal{U}^P_{ad}[0,T]}J^*\left(x_0,\gamma\right)\\
&=\sup_{\gamma\in\mathbb{R}}\left\{\left[P_-(0)e^{-2\int_0^Tr(s)ds}-1\right]\gamma^2-2\left[x_0P_-(0)e^{-\int_0^Tr(s)ds}-d\right]\gamma\right.+P_-(0)x_0^2-d^2\Big\}\\
&=\frac{P_-(0)e^{-2\int_t^Tr(s)ds}}{1-P_-(0)e^{-2\int_0^Tr(s)ds}}\big[d-x_0e^{\int_0^Tr(s)ds}\big]^2, \quad d\geq x_0e^{\int_0^Tr(s)ds}.
\end{aligned}
\end{equation*}
This proves (\ref{efficient frontier}), noting that $\mathbb{E}X^*(T)=d$. The proof is complete.
\end{proof}

It is interesting to note that, after using the stochastic filtering, the wealth process (\ref{wealth/suplus-new}) contains random coefficient $m(\cdot)$. Accordingly, the problem becomes more complex. Specifically, the conventional stochastic Riccati equations turn to two BSDEs (\ref{Riccati+}) and (\ref{Riccati-}). Usually, this kind of nonlinear BSDEs have no analytical solutions. However, if all the market coefficients are deterministic, then $\Lambda(\cdot)\equiv0$ and the equations (\ref{Riccati+}) and (\ref{Riccati-}) turn to ODEs. We can see this in detail in the next section.

\section{Mean-Variance Problem with Full Information}

In this section, we derive the efficient frontier of the full information mean-variance problem. Specifically, the insurer is allowed to invest its surplus in a financial market and purchase proportional reinsurance. We consider the Noncheap-reinsurance in this section, i.e., $\eta>\theta$. From (\ref{wealth/suplus}), the wealth process $X(\cdot)$ satisfies:
\begin{equation}\label{wealth/suplus-full}
\left\{
\begin{aligned}
dX(t)&=\big[a\theta-a\eta+a\eta q(t)+rX(t)+(\mu-r)\pi(t)\big]dt\\
     &\quad+bq(t)dW^0(t)+\sigma\pi(t)dW^1(t),\ t\in[0,T],\\
 X(0)&=x_0,
\end{aligned}
\right.
\end{equation}
where we have let $\mu(t)\equiv\mu$, $r(t)\equiv r$ and $\sigma(t)\equiv\sigma$ for all $t$.

\newtheorem*{mydef1}{Definition 4.1}
\begin{mydef1}
A strategy $u(\cdot):=(\pi(\cdot),q(\cdot))^\prime$ is said to be admissible if $\pi(\cdot)\in[0,\infty)$ and $q(\cdot)\in[0,\infty)$ are $\mathcal{F}_t$-progressively measurable, satisfying $\mathbb{E}\int_0^T\pi^2(t)dt <\infty$ and $\mathbb{E}\int_0^Tq^2(t)dt <\infty$. Denote the set of all admissible strategies by $\mathcal{U}^F_{ad}[0,T]$.
\end{mydef1}

It is reasonable to impose $d\ge d_1$, where $d_1:=x_0e^{Tr}+\frac{a\theta-a\eta}{r}(e^{Tr}-1)$ is the terminal wealth at time $T$, if insurance company invests all of its wealth at hand into the risk-free asset and transfers all forthcoming risks to the reinsurer.

The mean-variance problem is formulated as the following optimization problem with full information:
\begin{equation}\label{mean-variance-full}
\begin{aligned}
	&\emph{\emph{minimize}} \quad \text{Var}[X(T)]=\mathbb{E}\big[X(T)-\mathbb{E}X^*(T)\big]^2,\\
	&\emph{\emph{subject to:}}
     \left\{\begin{array}{l}
      \mathbb{E}X(T)=d,\\
       u(\cdot)\in\mathcal{U}^F_{ad}[0,T],\\
       (X(\cdot),u(\cdot))\text{ satisfy equation }(\ref{wealth/suplus-full}).
     \end{array}\right.
\end{aligned}
\end{equation}

\subsection{Value Function for Auxiliary Problem}

Similarly, the problem (\ref{mean-variance-full}) can be solved via the following stochastic LQ control problem (for every fixed $\gamma$)
\begin{equation}\label{mean-variance-full-gamma}
\begin{aligned}
&\text{minimize}\quad \mathbb{E}\left\{[X(T)-d]^2+2\gamma[\mathbb{E}X(T)-d]\right\},\\
&\text{subject to:}
 \left\{\begin{array}{l}
   u(\cdot)\in\mathcal{U}^F_{ad}[0,T],\\
   (X(\cdot),u(\cdot)) \text{ satisfy equation }(\ref{wealth/suplus-full}),
 \end{array}\right.
\end{aligned}
\end{equation}
where the factor 2 in front of the multiplier $\gamma$ is introduced in the objective function just for convenience. Clearly, this problem is equivalent to the following auxiliary problem
\begin{equation}\label{auxiliary problem}
\begin{aligned}
&\text{minimize}\quad \mathbb{E}\left\{\frac{1}{2}\big[X(T)-(d-\gamma)\big]^2\right\},\\
&\text{subject to:}
 \left\{\begin{array}{l}
   u(\cdot)\in\mathcal{U}^F_{ad}[0,T],\\
   (X(\cdot),u(\cdot))\text{ satisfy equation }(\ref{wealth/suplus-full}).\\
 \end{array}\right.
\end{aligned}
\end{equation}

Set
\begin{equation}\label{xt}
x(t):=X(t)-(d-\gamma),
\end{equation}
then (\ref{wealth/suplus-full}) is equivalent to the following controlled linear SDE:
\begin{equation}\label{linearSDE-full}
\left\{
\begin{aligned}
 dx(t)&=\big[rx(t)+B u(t)+f\big]dt+D^1u(t)dW^0(t)+D^2u(t)dW^1(t),\ t\in[0,T],\\
  x(0)&=x_0-(d-\gamma),
\end{aligned}
\right.
\end{equation}
where $u(\cdot)\equiv(q(\cdot),\pi(\cdot))^\prime\in\mathcal{U}^F_{ad}[0,T]$ and
\begin{equation*}
\begin{array}{l}
 B \equiv\left(a\eta,\mu-r\right),\ f\equiv a\theta-a\eta+(d-\gamma)r,\ D^1\equiv\left(b,0\right),\ D^2 \equiv\left(0,\sigma \right).
\end{array}
\end{equation*}

Our objective is to find an optimal control $u^*(\cdot)$ that minimizes the quadratic cost functional
\begin{equation}\label{cost functional-x}
J(u(\cdot))=\frac{1}{2}\mathbb{E}x(T)^2.
\end{equation}

The problem is an indefinite stochastic LQ control problem. An important feature in this problem is that the control is constrained. In this subsection, we use HJB equation and viscosity solution theory to solve it.

The value function associated with the problem (\ref{linearSDE-full})-(\ref{cost functional-x}) is defined by
\begin{equation}\label{value function}
V(s,y)=\inf_{u(\cdot)\in\mathcal{U}^F_{ad}[s,T]}J(s,y;u(\cdot)),
\end{equation}
where $x(s)=y\in\mathbb{R},s\in[0,T)$.

From standard arguments (for example, Yong and Zhou \cite{YZ99}), we see that if $V(\cdot,\cdot)\in C^{1,2}[0,T]\times\mathbb{R}$, then it satisfies the following HJB equation:
\begin{equation}\label{HJB}
\left\{
\begin{aligned}
&\frac{\partial v(t,x)}{\partial t}+\inf _{q\geq 0,\pi\geq 0}\left\{\frac{\partial v(t,x)}{\partial x}\big[rx+a\eta q(t)+(\mu-r)\pi(t)\right.\\
&\quad\left.+a\theta-a\eta+(d-\gamma)r\big]+\frac{1}{2}\big[b^2q^2(t)+{\sigma^2}\pi^2(t)\big]\frac{\partial^2v(t,x)}{\partial x^2}\right\}=0, \quad t\in[0,T], \\
&v(T,x)=\frac{1}{2}x^2.
\end{aligned}
\right.
\end{equation}

Owing to the nonnegativity constraint of the control, the HJB equation (\ref{HJB}) does not have a smooth solution. Hence, the idea here is to construct a function, to show that it is a viscosity solution to it, and then employ the verification theorem to construct the optimal control. We will do this in the next subsection.

\subsection{Optimal Control and Viscosity Solution}

This subsection is devoted to verify the following result.

\newtheorem*{my}{Theorem 4.2}
\begin{my}
Define
\begin{equation}\label{g1tA1t}
\left\{
\begin{aligned}
	g_1(t)&:=\frac{[a\theta-a\eta+(d-\gamma)r][e^{r(T-t)}-1]}{r},\\
    A_1&:=-\frac{(\mu-r)^2}{2\sigma^2}-\frac{a^2\eta^2}{2b^2}.
\end{aligned}
\right.
\end{equation}
Then the value function
\begin{equation}\label{value}
V(t,x)=\left\{
 \begin{aligned}
  &\frac{1}{2}\big[e^{r(T-t)}x+g_1(t)\big]^2, & \text{if\ }x+g_1(t)e^{-r(T-t)}\geq 0, \\
  &\frac{1}{2}\big[e^{(A_1+r)(T-t)}x+e^{(T-t)A_1}g_1(t)\big]^2, & \text{if\ }x+g_1(t)e^{-r(T-t)}<0,
 \end{aligned}\right.
\end{equation}
is a continuous viscosity solution to the HJB equation $(\ref{HJB})$, and
\begin{equation}\label{feedback}
u^*(t,x)=
\left\{
\begin{aligned}
&\Big(-\frac{\mu-r}{\sigma^2}\big[x+g_1(t)e^{-r(T-t)}\big],-\frac{a\eta}{b^2}\big[x+g_1(t)e^{-r(T-t)}\big]\Big)^\prime,\\
&\hspace{5cm}\text{if\ }x+g_1(t)e^{-r(T-t)}<0,\\
&(0,0)^\prime,\hspace{3.8cm}\text{if\ }x+g_1(t)e^{-r(T-t)}\geq0,
\end{aligned}
\right.
\end{equation}
is the associated optimal feedback control.
\end{my}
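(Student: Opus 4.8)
The plan is to verify directly that the function $V$ in (\ref{value}) is a viscosity solution of the HJB equation (\ref{HJB}), treating separately the two open regions and the interface $\Gamma:=\{(t,x):\phi(t,x)=0\}$ with $\phi(t,x):=x+g_1(t)e^{-r(T-t)}$, and then to run the verification argument to identify $V$ with the value function (\ref{value function}) and $u^*$ in (\ref{feedback}) with the optimal feedback. Write $k:=a\theta-a\eta+(d-\gamma)r$, so that $g_1(t)=\tfrac{k}{r}\big(e^{r(T-t)}-1\big)$, $\phi(t,x)=x+\tfrac{k}{r}\big(1-e^{-r(T-t)}\big)$, $V=\tfrac12 e^{2r(T-t)}\phi^2$ on $\{\phi\ge0\}$ and $V=\tfrac12 e^{2(A_1+r)(T-t)}\phi^2$ on $\{\phi<0\}$. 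First, $g_1(T)=0$ gives $V(T,x)=\tfrac12 x^2$, the required terminal condition. Second, the elementary identities $g_1'(t)+rg_1(t)=-k$, equivalently $\partial_t\phi=-ke^{-r(T-t)}$ and $r(x-\phi)+k=ke^{-r(T-t)}$, are what make the PDE computations below collapse to $0=0$.

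On the open set $\{\phi>0\}$ the function $V$ is $C^{1,2}$ with $\partial_x V=e^{2r(T-t)}\phi>0$ and $\partial_{xx}V=e^{2r(T-t)}>0$. Since the bracket minimized in (\ref{HJB}) is convex in $(q,\pi)$ with strictly positive partial derivatives at the origin — here, and below, the standing assumption $\mu\ge r$ is used to ensure the constrained minimizers are as claimed — the infimum is attained at $q=\pi=0$, so (\ref{HJB}) reduces to $\partial_t V+\partial_x V[rx+k]=0$, which a direct substitution using $r(x-\phi)+k=ke^{-r(T-t)}$ confirms. On $\{\phi<0\}$, $V$ is $C^{1,2}$ with $\partial_x V=e^{2(A_1+r)(T-t)}\phi<0$ and $\partial_{xx}V=e^{2(A_1+r)(T-t)}>0$; the unconstrained critical point $q^*=-\tfrac{a\eta}{b^2}\phi>0$, $\pi^*=-\tfrac{\mu-r}{\sigma^2}\phi\ge0$ then lies in the admissible quadrant and realizes the minimum, the minimized bracket equals $\partial_x V[rx+k]+A_1(\partial_x V)^2/\partial_{xx}V$, and (\ref{HJB}) becomes $\partial_t V+\partial_x V[rx+k]+A_1(\partial_x V)^2/\partial_{xx}V=0$, again verified by substitution. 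These minimizers coincide with (\ref{feedback}), and $V$ solves (\ref{HJB}) classically off $\Gamma$.

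At the interface one checks from the two one-sided formulas that $\partial_t V$ and $\partial_x V$ both vanish on $\Gamma$, so $V\in C^1$ across $\Gamma$; only $\partial_{xx}V$ is discontinuous there, equaling $e^{2r(T-t)}$ from the $\{\phi>0\}$ side and the strictly smaller $e^{2(A_1+r)(T-t)}$ from the $\{\phi<0\}$ side, since $A_1<0$. Let $\psi\in C^{1,2}$ be such that $V-\psi$ has a local maximum at $(t_0,x_0)\in\Gamma$ with $t_0<T$; the $C^1$ property forces $\partial_t\psi=\partial_x\psi=0$ there, and since $\phi(t_0,\cdot)$ is affine of slope one, $V(t_0,\cdot)$ equals $\tfrac12 e^{2r(T-t_0)}(x-x_0)^2$ for $x\ge x_0$ and $\tfrac12 e^{2(A_1+r)(T-t_0)}(x-x_0)^2$ for $x\le x_0$, whence $\partial_{xx}\psi(t_0,x_0)\ge e^{2r(T-t_0)}>0$; substituting into (\ref{HJB}) — the drift term drops because $\partial_x\psi=0$, and $\inf_{q,\pi\ge0}\tfrac12(b^2q^2+\sigma^2\pi^2)\partial_{xx}\psi=0$ — gives the subsolution inequality. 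If instead $V-\psi$ has a local minimum at $(t_0,x_0)\in\Gamma$, then $\partial_{xx}\psi(t_0,x_0)\le e^{2(A_1+r)(T-t_0)}$; when this is $\ge0$ the infimum above is still $0$, and when it is $<0$ the infimum (letting $q\to\infty$) is $-\infty$, so the supersolution inequality holds in either case. Hence $V$ is a continuous viscosity solution of (\ref{HJB}).

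For optimality, $u^*$ in (\ref{feedback}) is a linear feedback with coefficients bounded on $[0,T]$, so the closed-loop equation for $x$ (equivalently for $\phi$) has a unique strong solution in $L^2$ and $\pi^*,q^*\ge0$ by the sign of $\phi$, whence $u^*\in\mathcal{U}^F_{ad}[0,T]$. Under $u^*$ the process $\phi(t,x^{u^*}(t))$ obeys $d\phi=r\phi\,dt$ when it starts at a value $\ge0$ and the linear SDE $d\phi=(r+2A_1)\phi\,dt-\tfrac{a\eta}{b}\phi\,dW^0-\tfrac{\mu-r}{\sigma}\phi\,dW^1$ when it starts $<0$; in either case it never changes sign, so the trajectory remains in a region where $V$ is $C^{1,2}$, the classical It\^o formula applies, the $dt$-term of $V(t,x^{u^*}(t))$ vanishes because $u^*$ attains the pointwise infimum in (\ref{HJB}), and $V(0,x_0-(d-\gamma))=J(u^*)$ follows using $V(T,\cdot)=\tfrac12(\cdot)^2$. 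For an arbitrary $u\in\mathcal{U}^F_{ad}[0,T]$, the generalized It\^o formula applied to the $C^1$ function $V$ with locally bounded second $x$-derivative shows the $dt$-term of $V(t,x^u(t))$ dominates $\partial_t V+\inf_{q,\pi\ge0}\{\cdots\}=0$, so $V(t,x^u(t))$ is a submartingale and $V(0,x_0-(d-\gamma))\le J(u)$; combining the two bounds (in the spirit of the verification theorem for viscosity solutions, see \cite{YZ99}) proves that $u^*$ is optimal and $V$ is the value function. The main obstacle is the interface $\Gamma$: pinning down the exact one-sided second-order behaviour of $V$ there, which governs both the viscosity sub/supersolution check and the It\^o argument, together with the admissibility and integrability estimates that make the verification step rigorous.
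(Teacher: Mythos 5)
Your argument is correct and, in its core, follows the same strategy as the paper: show $V$ is a classical solution of the HJB equation in each of the two open regions (with minimizer $(0,0)^\prime$ where $\partial_x V\ge 0$ and the interior critical point where $\partial_x V<0$), check $C^1$-regularity across the switching curve, and then use the one-sided second-order behaviour there to verify the viscosity sub- and supersolution inequalities. Your test-function computation at the interface --- $\partial_t\psi=\partial_x\psi=0$ forced by the $C^1$ matching, and $\partial_{xx}\psi\ge e^{2r(T-t_0)}$ (resp.\ $\le e^{(2A_1+2r)(T-t_0)}$) for test functions touching from above (resp.\ below) --- is exactly the paper's explicit description of the parabolic semijets $D^{1,2,\pm}_{t,x}V$ in (\ref{D+D-}), just phrased without naming them; your case split on the sign of $\partial_{xx}\psi$ in the supersolution check is in fact slightly more careful than the paper's one-line inequality. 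The genuine difference is the last step: the paper simply exhibits a point of $D^{1,2,+}_{t,x}V\times\mathcal{U}^F_{ad}$ achieving equality in (\ref{verification}) and invokes the stochastic verification theorem of Zhou, Yong and Li \cite{ZhouYongLi1997}, whereas you run a self-contained It\^o/submartingale verification, using the nice observation that the closed-loop process $\phi(t,x^{u^*}(t))$ solves a linear SDE and hence never crosses the switching curve, so classical It\^o suffices along the optimal trajectory. This buys transparency, but the inequality $V(0,\cdot)\le J(u)$ for \emph{arbitrary} admissible $u$ then rests on a generalized It\^o formula for a $C^1$ function whose second $x$-derivative is only locally bounded and undefined on $\Gamma$ (one must argue the occupation time of $\Gamma$ contributes nothing to the quadratic-variation integral); you flag this but do not fully discharge it, which is precisely the technicality the citation to \cite{ZhouYongLi1997} is designed to absorb. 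Your standing assumption $\mu\ge r$ is also needed (and left implicit) in the paper.
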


\begin{proof}
First we show that $V(\cdot,\cdot)$ constructed in (\ref{value}) is a viscosity solution to (\ref{HJB}).

Suppose that it has a solution $v(\cdot,\cdot)\in C^{1,2}[0,T]\times\mathbb{R}$ satisfying $\frac{\partial^2v}{\partial x^2}>0$. Then, if $\frac{\partial v}{\partial x}\geq 0$, the minimum of the left hand side of (\ref{HJB}) is attained at $u^*(\cdot)=(q^*(\cdot),\pi^*(\cdot))^\prime=(0,0)^\prime$. Assuming that $v(t,x)$ has the following form:
\begin{equation}\label{assume}
v(t,x)=\frac{1}{2}P(t)x^2+Q(t)x+R(t),
\end{equation}
where $P(\cdot),Q(\cdot),R(\cdot)$ are differentiable functions to be determined. Inserting (\ref{assume}) and $u^*(\cdot)=(q^*(\cdot),\pi^*(\cdot))^\prime=(0,0)^\prime$ into (\ref{HJB}), we have
\begin{equation}\label{ODEs}
\left\{
\begin{aligned}
&\dot{P}(t)+2rP(t)=0, \quad P(T)=1,\\
&\dot{Q}(t)+rQ(t)+[a\theta-a\eta+(d-\gamma)r]P(t)=0, \quad Q(T)=0,\\
&\dot{R}(t)+[a\theta-a\eta+(d-\gamma)r]Q(t)=0, \quad R(T)=0.
\end{aligned}
\right.
\end{equation}
Solving them, we obtain
\begin{equation}
P_1(t)=e^{2r(T-t)}, \quad Q_1(t)=g_1(t)e^{r(T-t)}, \quad R_1(t)=\frac{g_1^2(t)}{2},
\end{equation}
with $g_1(t)$ being defined in (\ref{g1tA1t}). Considering the assumption $\frac{\partial v}{\partial x}\geq 0$, we have
\begin{equation*}
v(t,x)=\frac{1}{2}\big[e^{r(T-t)}x+g_1(t)\big]^2
\end{equation*}
in the region
\begin{equation*}
\mathcal{A}_1=\left\{(t,x)\in[0,T]\times\mathbb{R}:x+g_1(t)e^{-r(T-t)}\geq 0\right\},
\end{equation*}
and the minimum is attained at $\left(\pi^*(\cdot),q^*(\cdot)\right)=(0,0)$.

For $(t,x)\in\left\{(t,x)\in[0,T]\times\mathbb{R}:x+g_1(t)e^{-r(T-t)}<0\right\}$, we have $\frac{\partial v}{\partial x}<0$. Assume that the minimum of (\ref{HJB}) is attained in the interior of the control region. Then
\begin{equation}\label{pi*q*}
 \pi^*(t,x)=-\frac{\mu-r}{\sigma^2}\frac{\frac{\partial v(t,x)}{\partial x}}{\frac{\partial^2 v(t,x)}{\partial x^2}}, \quad
  q^*(t,x)=-\frac{a\eta}{b^2}\frac{\frac{\partial v(t,x)}{\partial x}}{\frac{\partial^2 v(t,x)}{\partial x^2}}.
\end{equation}
Inserting this into (\ref{HJB}), the HJB equation becomes
\begin{equation}\label{HJB-2}
\left\{
\begin{aligned}
&\frac{\partial v(t,x)}{\partial t}+\big[rx+a\theta-a\eta+(d-\gamma)r\big]\frac{\partial v(t,x)}{\partial x}\\
&\qquad-\frac{(\mu-r)^2}{2\sigma^2}\frac{(\frac{\partial v(t,x)}{\partial x})^2}{\frac{\partial^2 v(t,x)}{\partial x^2}}
-\frac{a^2\eta^2}{2b^2}\frac{(\frac{\partial v(t,x)}{\partial x})^2}{\frac{\partial^2 v(t,x)}{\partial x^2}}=0,\ t\in[0,T],\\
&v(T,x)=\frac{1}{2}x^2.
\end{aligned}
\right.
\end{equation}
Inserting (\ref{assume}) and (\ref{pi*q*}) into (\ref{HJB-2}), we obtain
\begin{equation}\label{ODEss}
\left\{
\begin{aligned}
&\dot{P}(t)+[2r+2A_1]P(t)=0, \quad P(T)=1, \\
&\dot{Q}(t)+[r+2A_1]Q(t)+[a\theta-a\eta+(d-\gamma)r]P(t)=0, \quad Q(T)=0, \\
&\dot{R}(t)+\frac{A_1Q^2(t)}{P(t)}+[a\theta-a\eta+(d-\gamma)r]Q(t)=0, \quad R(T)=0,
\end{aligned}
\right.
\end{equation}
where $A_1$ is defined in (\ref{g1tA1t}). Solving them, we have
\begin{equation}
P_2(t)=e^{(2A_1+2r)(T-t)}, \ Q_2(t)=g_1(t)e^{(2A_1+r)(T-t)}, \ R_2(t)=\frac{1}{2}{e^{2(T-t)A_1}g^2_1(t)}.
\end{equation}
Since $\frac{\partial v}{\partial x}<0$, we have
\begin{equation*}
v(t,x)=\frac{1}{2}\big[e^{(A_1+r)(T-t)}x+e^{(T-t)A_1}g_1(t)\big]^2.
\end{equation*}
In the region
\begin{equation*}
\mathcal{A}_2=\left\{(t,x)\in[0,T]\times\mathbb{R}:x+g_1(t)e^{-r(T-t)}<0\right\},
\end{equation*}
the minimum is attained at
$$\left(\pi^*(t),q^*(t)\right)=\Big(-\frac{\mu-r}{\sigma^2}\big[x+g_1(t)e^{-r(T-t)}\big],-\frac{a\eta}{b^2}\big[x+g_1(t)e^{-r(T-t)}\big]\Big).$$

In the inner regions $\mathcal{A}_i(i=1,2),v(\cdot,\cdot)\in C^{1,2}[0,T]\times\mathbb{R}$, thus it is a classical solution inside these regions. However, the switching curve $\mathcal{A}_3$ defined by
\begin{equation*}
\mathcal{A}_3=\left\{(t,x)\in[0,T]\times\mathbb{R}:x+g_1(t)e^{-r(T-t)}=0\right\}
\end{equation*}
is where the non-smoothness of $V(\cdot,\cdot)$ happens.

Firstly, a direct calculation shows that
\begin{equation*}
 V(t,x)=\frac{1}{2}\big[e^{r(T-t)}x+g_1(t)\big]^2=\frac{1}{2}\big[e^{(A_1+r)(T-t)}x+e^{(T-t)A_1}g_1(t)\big]^2=0
\end{equation*}
on $\mathcal{A}_3$. Therefore, $V(\cdot,\cdot)$ is continuous at points on $\mathcal{A}_3$. In addition, we also easily obtain
\begin{equation}
\left\{
\begin{aligned}
 \frac{\partial V(t,x)}{\partial t}&=\frac{1}{2}\dot{P}_1(t)x^2+\dot{Q}_1(t)x+\dot{R}_1(t)=\frac{1}{2}\dot{P}_2(t)x^2+\dot{Q}_2(t)x+\dot{R}_2(t)=0, \\
 \frac{\partial V(t,x)}{\partial x}&=P_1(t)x+Q_1(t)=P_2(t)x+Q_2(t)=0.
\end{aligned}
\right.
\end{equation}

That is, $V(\cdot,\cdot)$ is also continuously differentiable at points on $\mathcal{A}_3$. However, $\frac{\partial^2 V}{\partial x^2}$ does not exist on $\mathcal{A}_3$, since $P_1(t)\not\equiv P_2(t)$. This means that $V$ does not has the smoothness property to qualify as a classical solution to the HJB equation (\ref{HJB}). For this reason, we need to work within the framework of viscosity solutions. (Please refer to Yong and Zhou \cite{YZ99}, Li et al. \cite{LiZhouLim2002} for some basic terminologies of viscosity solutions.)

It can be shown that for any $(t,x)\in\mathcal{A}_3$,
\begin{equation}\label{D+D-}
\left\{
\begin{aligned}
D_{t,x}^{1,2,+}V(t,x)&=\{0\}\times\{0\}\times[P_1(t),+\infty),\\
D_{t,x}^{1,2,-}V(t,x)&=\{0\}\times\{0\}\times(-\infty,P_2(t)].
\end{aligned}
\right.
\end{equation}
For the HJB equation (\ref{HJB}), we define
\begin{equation}\label{G}
\begin{aligned}
G(t,x,u,p,P)&:=p\big[rx+a\eta q(t)+(\mu-r)\pi(t)+a\theta-a\eta+(d-\gamma)r\big]\\
            &\quad+\frac{1}{2}P\big[b^2q^2(t)+{\sigma^2}\pi^2(t)\big].
\end{aligned}
\end{equation}
For any $(q,p,P)\in D_{t,x}^{1,2,+}V(t,x)$, when $(t,x)\in\mathcal{A}_3$, we have
\begin{equation}\label{inf+}
\begin{aligned}
&q+\inf_{u\geq 0}G(t,x,u,p,P)=\inf_{u\geq 0}\left\{\frac{1}{2}P\big[b^2q^2(t)+{\sigma^2}\pi^2(t)\big]\right\}\\
&\geq\inf_{u\geq 0}\left\{\frac{1}{2}P_1(t)\big[b^2q^2(t)+\sigma^2\pi^2(t)\big]\right\}=0.
\end{aligned}
\end{equation}
Therefore, $V(\cdot,\cdot0$ is a viscosity sub-solution to (\ref{HJB}). On the other hand, for $(q,p,P)\in D_{t,x}^{1,2,-}V(t,x)$, when $(t,x)\in\mathcal{A}_3$, we have
\begin{equation}\label{inf-}
\begin{aligned}
&q+\inf_{u\geq 0}G(t,x,u,p,P)=\inf_{u\geq 0}\left\{\frac{1}{2}P\big[b^2q^2(t)+\sigma^2\pi^2(t)\big]\right\}\\
&\leq\inf_{u\geq 0}\left\{\frac{1}{2}P_2(t)\big[b^2q^2(t)+\sigma^2\pi^2(t)\big]\right\}=0.
\end{aligned}
\end{equation}
Therefore, $V(\cdot,\cdot)$ is also a viscosity super-solution to (\ref{HJB}).

Finally, it is easy to see that the terminal condition $V(T,x)=\frac{1}{2}x^2$ is satisfied. Hence, it follows that $V(\cdot,\cdot)$ is a viscosity solution to the HJB equation (\ref{HJB}).

Moreover, for any $(t,x)\in\mathcal{A}_3$, take $\left(q^*(t,x), p^*(t,x),P^*(t,x),u^*(t,x)\right)=(0,0,P_1(t),0)\\\in D_{t,x}^{1,2,+}V(t,x)\times\mathcal{U}^F_{ad}[s,T]$, then
\begin{equation}\label{verification}
q^{*}(t, x)+G\left(t, x, u^{*}(t, x), p^{*}(t, x), P^{*}(t, x)\right)=0.
\end{equation}
It then follows from the verification theorem (Zhou et al. \cite{ZhouYongLi1997}) that $u^*(t,x)$ defined by (\ref{feedback}) is the optimal feedback control. The proof is complete.
\end{proof}

\subsection{Efficient Strategies and Efficient Frontier}

In this subsection, we give the efficient frontier for the problem (\ref{mean-variance-full}), i.e., we derive the connection between the expected value and the variance of the terminal wealth for each efficient strategy. First of all, noting (\ref{xt}) and (\ref{cost functional-x}), we have
\begin{equation*}
\begin{aligned}
 \mathbb{E}\left\{\frac{1}{2}x(T)^2\right\}&=\mathbb{E}\left\{\frac{1}{2}[X(T)-(d-\gamma)]^2\right\}\\
                                           &=\mathbb{E}\left\{\frac{1}{2}[X(T)-d]^2\right\}+\gamma[\mathbb{E}X(T)-d]+\frac{1}{2}\gamma^2.
\end{aligned}
\end{equation*}
Hence, for every fixed $\gamma$, we have
\begin{equation}
\begin{aligned}
  &\min_{u(\cdot)\in\mathcal{U}^F_{ad}[0,T]}\mathbb{E}\left\{\frac{1}{2}[X(T)-d]^2+\gamma[\mathbb{E}X(T)-d]\right\}
   =\min_{u(\cdot)\in\mathcal{U}^F_{ad}[0,T]}\mathbb{E}\left\{\frac{1}{2}x(T)^2\right\}-\frac{1}{2}\gamma^2\\
 &=\ V(0,x)-\frac{1}{2}\gamma^2=\frac{1}{2}P(0)x^2+Q(0)x+R(0)-\frac{1}{2}\gamma^2\\
 &=\ \frac{1}{2}P(0)\left[x_0-(d-\gamma)\right]^2+Q(0)\left[x_0-(d-\gamma)\right]+R(0)-\frac{1}{2}\gamma^2,
\end{aligned}
\end{equation}
where $P(\cdot),Q(\cdot)$ and $R(\cdot)$ are specified in (\ref{assume}). If $x+g_1(t)e^{-rT}<0$, we have a concave quadratic function in $\gamma$:
\begin{equation*}
\begin{aligned}
&\min_{u(\cdot)\in\mathcal{U}^F_{ad}[0,T]}\mathbb{E}\left\{\frac{1}{2}[X(T)-d]^2+\gamma[\mathbb{E}X(T)-d]\right\}\\
&=\frac{1}{2}P_2(0)\left[x_0-(d-\gamma)\right]^2+Q_2(0)\left[x_0-(d-\gamma)\right]+R_2(0)-\frac{1}{2}\gamma^2\\
&=\frac{1}{2}e^{2A(1)T}\left[x_0e^{Tr}+\frac{a\theta-a\eta}{r}-(d-\gamma)\right]^2-\frac{1}{2}\gamma^2.
\end{aligned}
\end{equation*}
If $x+g_1(t)e^{-rT}\geq0$, we have a linear function in $\gamma$:
\begin{equation*}
\begin{aligned}
&\min_{u(\cdot)\in\mathcal{U}^F_{ad}[0,T]}\mathbb{E}\left\{\frac{1}{2}[X(T)-d]^2+\gamma[\mathbb{E}X(T)-d]\right\}\\
&=\frac{1}{2}P_1(0)\left[x_0-(d-\gamma)\right]^2+Q_1(0)\left[x_0-(d-\gamma)\right]+R_1(0)-\frac{1}{2}\gamma^2\\
&=\frac{1}{2}\left[x_0e^{Tr} +\frac{a\theta-a\eta}{r}-(d-\gamma)\right]^2-\frac{1}{2}\gamma^2\\
&=\frac{1}{2}\left[x_0e^{Tr}-d +\frac{a\theta-a\eta}{r}\right]^2+\left[x_0e^{Tr}-d+\frac{a\theta-a\eta}{r}\right]\gamma.
\end{aligned}
\end{equation*}
Therefore we conclude that under the optimal strategy (\ref{pi*q*}), the optimal cost for the problem (\ref{mean-variance-full}) is
\begin{equation}\label{optimal cost-full}
\begin{aligned}
&\min_{u(\cdot)\in\mathcal{U}^F_{ad}[0,T]}\mathbb{E}\left\{[X(T)-d]^2+2\gamma[\mathbb{E}X(T)-d]\right\}\\
&=\left\{
 \begin{aligned}
 &e^{2A_1T}\left[x_0e^{Tr}+\frac{a\theta-a\eta}{r}-(d-\gamma)\right]^2-\gamma^2,\\
 &\hspace{4cm}\text{if\ }x_0-(d-\gamma)+g_1(t)e^{-rT}<0, \\
 &\left[x_0e^{Tr}-d+\frac{a\theta-a\eta}{r}\right]^2+2\left[x_0e^{Tr}-d+\frac{a\theta-a\eta}{r}\right]\gamma,\\
 &\hspace{4cm}\text{if\ }x_0-(d-\gamma)+g_1(t)e^{-rT}\geq0.
 \end{aligned}
 \right.
\end{aligned}
\end{equation}

Note that the above still relies on the Lagrange multiplier $\gamma$. Similarly, according to the Lagrange duality theorem, one needs to maximize the value in (\ref{optimal cost-full}) over $\gamma\in\mathbb{R}$. A simple calculation shows that (\ref{optimal cost-full}) achieves its maximum value
\begin{equation*}
\frac{\Big({d-x_0e^{Tr}-\frac{a\theta-a\eta}{r}}\Big)^2}{{e^{-2A_1T}-1}} \quad \text{at} \quad \gamma^*=\frac{x_0e^{Tr}+\frac{a\theta-a\eta}{r}-d}{e^{-2A_1T}-1}.
\end{equation*}

The above derivation leads to the following result.

\newtheorem*{my6}{Theorem 4.3}
\begin{my6}
The efficient strategy of the problem (\ref{mean-variance-full}) corresponding to the expected terminal wealth $\mathbb{E}X(T)=d$, as a function of time $t$ and wealth $X$, is
\begin{equation}\label{efficient strategy}
\begin{aligned}
&u^*(t,X)\equiv\left(\pi^*(t,X),q^*(t,X)\right)^\prime\\
&=\left\{
\begin{aligned}
 &(-\frac{\mu-r}{\sigma^2}\big[X+g_1(t)e^{-r(T-t)}\big],-\frac{a\eta}{b^2}\big[X+g_1(t)e^{-r(T-t)}\big])^\prime,\\
 &\hspace{4cm}\text{if\ }x_0-(d-\gamma^*)+g_1(t)e^{-r(T-t)}<0,\\
 &(0,0)^\prime,\hspace{2.8cm}\text{if\ }x_0-(d-\gamma^*)+g_1(t)e^{-r(T-t)}\geq0,
\end{aligned}
\right.
\end{aligned}
\end{equation}
where $\gamma^*=\frac{x_0e^{Tr}+\frac{a\theta-a\eta}{r}-d}{e^{-2A_1T}-1}$. Moreover, the efficient frontier is
\begin{equation}\label{efficient frontier-full}
\operatorname{Var}[X(T)]=\frac{\Big({x_0e^{Tr}+\frac{a\theta-a\eta}{r}-\mathbb{E}X(T)}\Big)^2}{{e^{-2A_1T}-1}}.
\end{equation}
\end{my6}

\subsection{Numerical Examples}

In this subsection, we give some numerical examples to illustrate the theoretical results obtained in this paper.

Firstly, we consider the filtering of $\mu(t)$ in (\ref{muS}). Set $l(t)\equiv 3,z(t)\equiv 2,\sigma(t)\equiv 1$. In detail, from (\ref{filtering equation}), the filtering of the appreciation rate process $m(t)$ increases as the value of $h$ increases. It is shown in Figure 1.  Figure 2 shows that the variance $n(t)$ of the conditional distribution $F_{\mathcal{G}_t}(x)=\mathbb{P}(\mu(t)\le x|\mathcal{G}_t) $ goes up when $h$ rises. On the other hand, we can also see that the filtering error $\mathbb{E}n(t)=\mathbb{E}(\mu(t)-m(t))^2$ tends to be stable with the increase of $t$.

\begin{figure}[H]
	\begin{minipage}[b]{0.5\linewidth}
		\centering
		\includegraphics[width=3in]{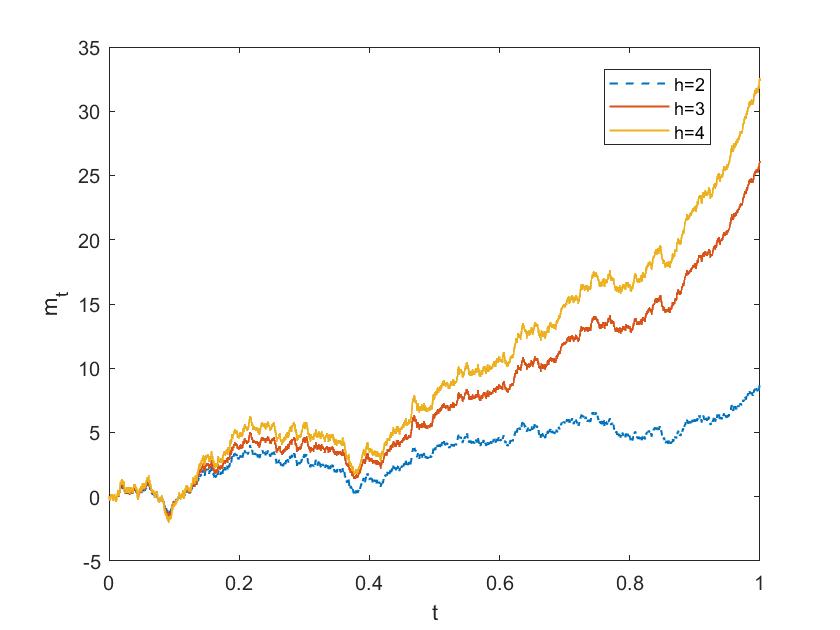}
		\caption{Comparison of $m_t$}
	\end{minipage}%
	\begin{minipage}[b]{0.5\linewidth}
		\centering
		\includegraphics[width=3in]{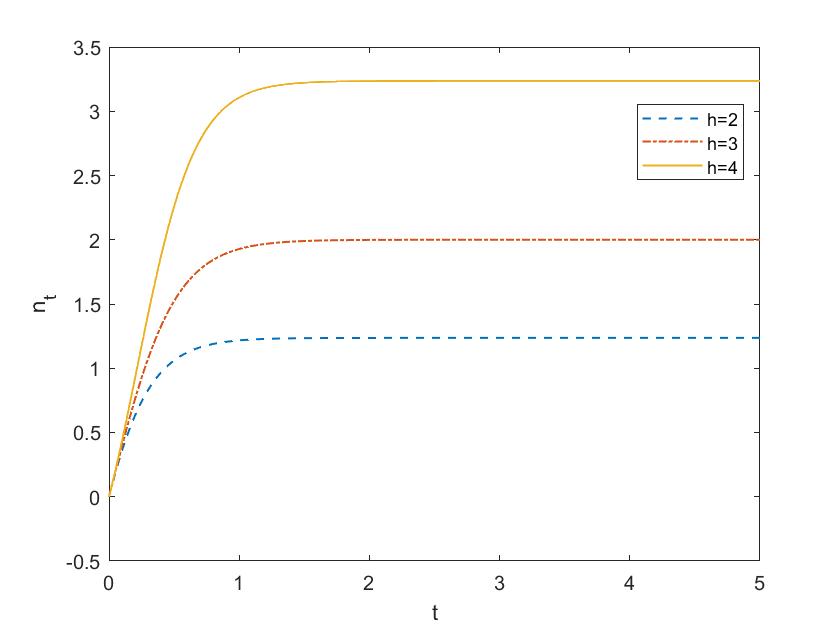}
		\caption{Comparison of $n_t$}
	\end{minipage}
\end{figure}

Next, we consider the following examples for the full information case. Let the initial wealth $x_0=50$, time duration $T=100$, safety loading of the insurer $\theta=0.3$, safety loading of the reinsurer $\eta=0.2$, the first-order and second-order moment of claim sizes $a=b=1$, expected return rate of the risky asset $\mu=0.06$, volatility of the risky asset $\sigma=1$ and the interest rate $r=0.04$.
\begin{figure}[H]
	\begin{minipage}[htbp]{0.5\linewidth}
		\centering
		\includegraphics[width=3in]{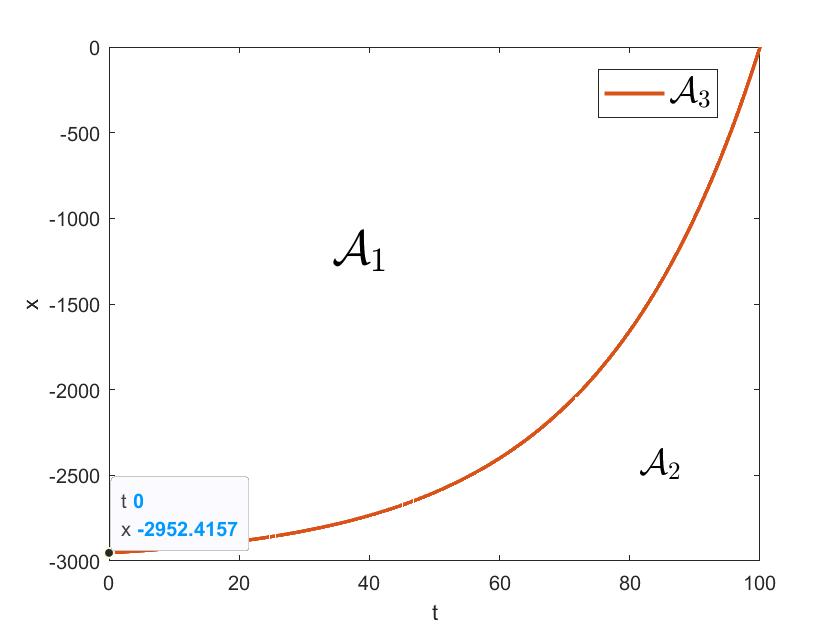}
		\caption{Value function region }
	\end{minipage}%
	\begin{minipage}[htbp]{0.5\linewidth}
		\centering
		\includegraphics[width=3in]{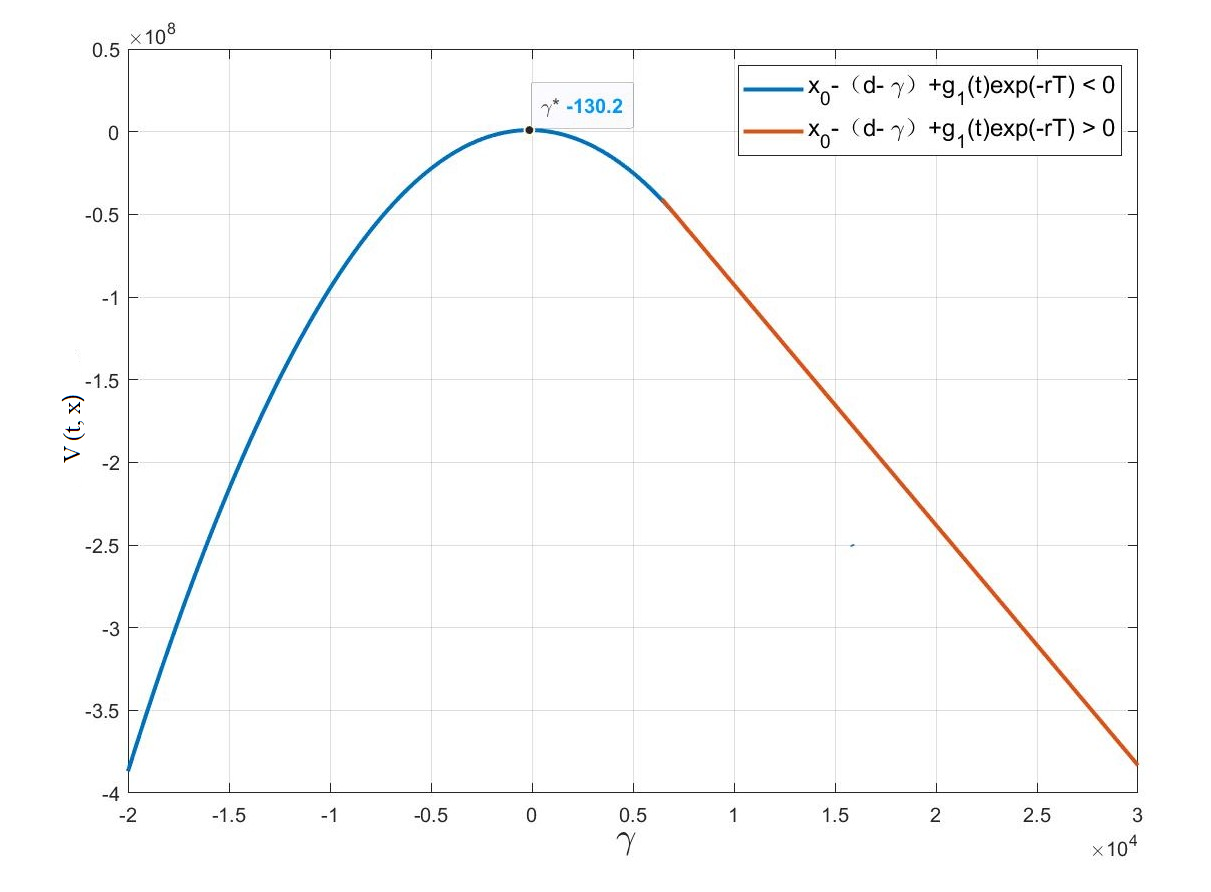}
		\caption{Optimal value function}
	\end{minipage}
\end{figure}

Figure 3 is a description of the value function region. We can find that value function $v(t,x)$ is $C^{1,2}[0,T] \times \mathbb{R}$ in the interior of the control region $\mathcal{A}_1$ and $\mathcal{A}_2$ in Figure 3 and $v(t,x)$ is non-smoothness in the switching curve $\mathcal{A}_3$. Figure 4 is the image of the value function (\ref{optimal cost-full}) and $\gamma$. From the figure, we can see that the value function reaches the maximum value when $\gamma=-130.2$, which is the same as the calculated results in formula $\gamma^*=\frac{x_0e^{Tr}+\frac{a\theta-a\eta}{r}-d}{e^{-2A_1T}-1}$.

\begin{figure}[htbp]
	\centering
	\includegraphics[width=3in]{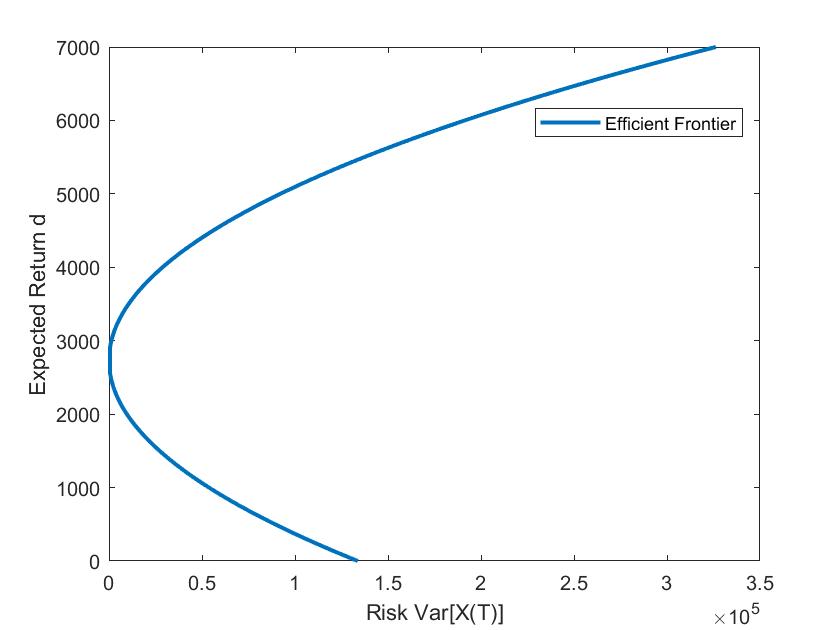}
	\caption{Efficient Frontier}
	\label{Figure}
\end{figure}

From Figure 5, we notice that efficient frontier is a quadratic curve. If $\text{Var}[X(T)]=0$, we can see that expected return $\mathbb{E}X(T)=d=2863.9$. In fact, in this case, the insurance firm invests all of its wealth at hand into the risk-free asset and transfers all forthcoming risks to the reinsurer. Thus, there is no risk for insurance firm here.

\section{Concluding Remarks}

A new partial information problem of an insurance firm towards optimal reinsurance and investment under the criterion of mean-variance, has been studied in this paper. We assume that we cannot directly observe the Brownian motion and the rate of return in the risky asset price dynamic equation. In fact, only partial information is available to the policymaker. This is more realistic. Based on separation principle and stochastic filtering theory, we can simply replace the rate of return with its filter in the wealth equation and then solve the partial information problem as in the full information case. Efficient strategies and efficient frontier are presented in closed forms via solutions to two extended stochastic Riccati equations. As a comparison, we also obtain the efficient strategies and efficient frontier by the viscosity solution to the HJB equation in the full information case.

It is worth noting that the mean-variance problem is a {\it time-inconsistent} problem owing to the term $[\mathbb{E}X(T)]^2$ in the cost functional. In this paper, we fix one initial point and then try to find the admissible control $u^*(\cdot)$ which maximizes the cost functional. We then simply disregard the fact that at a later points in time the control $u^*(\cdot)$ will not be optimal for the functional. In the economics literature, this is known as pre-commitment.

Possible extension to the mean-variance problem is in another different way. Inspired by the Bj\"{o}rk and Murgoci \cite{BM2010}, Bj\"{o}rk et al. \cite{BKM2017}, we could take the time inconsistency seriously and formulate the problem in game theoretic terms. We will study this topic in our forthcoming papers.

\end{document}